\title{Extremal Betti numbers of Rips complexes}
\author{Michael Goff}
\newtheorem{theorem}{Theorem}[section]
\newtheorem{corollary}[theorem]{Corollary}
\newtheorem{lemma}[theorem]{Lemma}
\newcommand{\K}{\Gamma}
\newcommand{\field}{{\bf k}}
\newcommand{\lk}{\mbox{\upshape lk}\,}
\newcommand{\st}{\mbox{\upshape st}\,}
\newcommand{\ds}{\mbox{\upshape dist}\,}
\newcommand{\B}{\tilde{\beta}}
\newcommand{\HH}{\tilde{H}}
\def\proofof#1{\smallskip\noindent {\it Proof of #1: \ }}
\def\endproof{\hfill$\square$\medskip}
\begin{document}

\begin{abstract}
Upper bounds on the topological Betti numbers of Vietoris-Rips complexes are established, and examples of such complexes with high Betti numbers are given.
\end{abstract}

\date{August 27, 2009}

\maketitle

\section{Introduction}
In this paper we consider extremal Betti numbers of Vietoris-Rips complexes.  Given a finite set of points $S$ in Euclidean space $\mathbb{R}^d$, we define the \textit{Vietoris-Rips complex} $R^{\epsilon}(S)$, or \textit{Rips complex}, as the simplicial complex whose faces are given by all subsets of $S$ with diameter at most $\epsilon$.  Take $R(S) := R^1(S)$.  Our main goal in this paper is to determine the largest topological Betti numbers of $R(S)$ in terms of $|S|$ and $d$.

Rips complexes have a wide range of applications.  Vietoris \cite{Vietoris} used Rips complexes to calculate the homology groups of metric spaces.  Other applications include geometric group theory \cite{GeoGroup}, simplicial approximation of point-cloud data \cite{PointModel1}, \cite{PointModel2}, \cite{PointModel3}, \cite{PointModel4}, and modeling communication between nodes in sensor networks \cite{Sensor1}, \cite{Sensor2}, \cite{Sensor3}.  In the specific case of the Euclidean plane, the topology of Rips complexes is studied in \cite{Planar}.  Rips complexes are used in manifold reconstruction in \cite{ChazOud}.

One of the main uses of the Rips complex is to approximate the topology of a point cloud.  The point cloud might be a random sample of points from a manifold or some other topological space.  Several papers, such as \cite{ChazOud}, give conditions on the point sample under which the Rips complex can be used to determine the homology and homotopy groups of the underlying space.  It is generally assumed that the Rips complex $R^{\epsilon}(S)$ is chosen in such a way that the points of $S$ are dense in the underlying space, relative to $\epsilon$.

For a fixed base field $\field$, we denote the homology groups of a simplicial complex $\Gamma$ by $\HH_p(\Gamma;\field)$.  The topological Betti numbers are given by $\B_p(\Gamma;\field) := \dim_{\field}(\HH_p(\Gamma;\field))$.  All of our results are independent of $\field$, and so from now on we suppress the base field from our notation.  We define $$M_{p,d}(n) := \max\{\B_p(R(S)): S \subset \mathbb{R}^d, |S| \leq n\}.$$

The \v{C}ech complex is another simplicial complex that captures the topology of a point cloud.  Given $S \subset \mathbb{R}^d$, the \v{C}ech complex $C(S)$ has vertex set $S$ and faces given by all sets of points that are contained in a ball of radius $\epsilon/2$.  By the Nerve Lemma \cite{Bjorner}, $\B_k(C(S)) = 0$ for $k \geq d$.  By contrast, if $d \geq 2$, then $\B_k(R(S))$ can be nonzero for arbitrarily large $k$.

[To be added: discussion of Matt Kahle's work]

In the interest of understanding the topology of Rips complexes, we consider the largest possible topological Betti numbers.  We find that nontrivial upper bounds are possible, but also that the Betti numbers can be quite large under specialized constructions.

The structure of this paper is as follows.  We review some facts on simplicial complexes in Section \ref{prelim}.  In Section \ref{Hom1}, we prove that $M_{1,d}(n)$ grows linearly in $n$ for each fixed $d$.  In Section \ref{Hom2}, we prove that $M_{2,2}(n)$ grows linearly in $n$, and in general, for each fixed $\delta$ and $d$, $M_{2,d}(n) < \delta n^2$ for sufficiently large $n$.  We also give a construction to prove that $M_{2,5}(n) > Cn^{3/2}$ for some constant $C$ and sufficiently large $n$.  In Section \ref{Hom3}, we extend the results of the previous sections by showing that for each fixed $\delta, p, d$, $M_{p,d}(n) < \delta n^p$ for sufficiently large $n$, and also that $M_{p,5}(n) > C_pn^{p/2+1/2}$, for a value $C_p$ that depends only on $p$ and sufficiently large $n$.  In Section \ref{QuasiRips} we consider similar bounds on the Betti numbers of related objects known as quasi-Rips complexes.  Our proofs make frequent use of the Mayer-Vietoris sequence and a careful analysis of the structure of the first homology group of a Rips complex.

\section{Definitions and preliminaries}
\label{prelim}
An abstract simplicial complex $\Gamma$ on a finite set $S$, called the vertex set, is a collection of subsets, called \textit{faces}, of $S$ that is closed under inclusion and contains all singleton subsets.  A face with two elements is called an \textit{edge}.  For convenience, we generally suppress commas and braces when expressing faces of a simplicial complex.  We also refer to the vertex set of $\Gamma$ by $V(\Gamma)$.

If $F$ is a face of $\Gamma$, then we define the link $\lk_\Gamma(F)$, or $\lk(F)$ when $\Gamma$ is implicit, as $\{G \in \Gamma: G \cup F \in \Gamma, G \cap F = \emptyset\}$.  The star $\st_\Gamma(F) = \st(F)$ is $\{G \in \Gamma: G \cup F \in \Gamma\}$.  If $\Gamma$ is a Rips complex $R(S)$, then the stars and links are also Rips complexes.  For an arbitrary subset $F \subset S$, define $N(F) := \{v \in S-F: \ds(u,v) \leq 1$ for all $u \in F\}$.  Then for $F \in R(S)$, $\lk(F) = R(N(F))$ and $\st(F) = R(N(F) \cup F)$.  The \textit{induced subcomplex} $\Gamma[W]$ for $W \subset V(\Gamma)$ is defined as $\{F: F \in \Gamma, F \subset W\}$.  For a Rips complex $R(S)$, $R(S)[W] = R(W)$.

Every Rips complex is also a flag complex.  A \textit{flag} complex, also called a \textit{clique} complex, is a simplicial complex $\Gamma$ such that $F \in \Gamma$ whenever all $2$-subsets of $F$ are edges in $\Gamma$.  Thus a flag complex is determined by its edges.  For a graph $G$, we define $X(G)$ to be the unique flag simplicial complex with the same edges as $G$.

Let $\Gamma$ be a simplicial complex with a subcomplex $\Gamma'$.  Let $\phi: \HH_p(\Gamma') \rightarrow \HH_p(\Gamma)$ be the map on homology induced by inclusion.  We define $\Omega_{p}(\Gamma,\Gamma')$ to be the image of $\phi$.

Our proofs give special attention to the structure of the first homology group.  Given a simplicial complex $\Gamma$ with $\{v_1, \ldots, v_r\} \subset V(\Gamma)$ and edges $v_1v_2, \ldots, v_{r-1}v_r, v_rv_1$, the notation $C = (v_1, \ldots, v_r)$ refers to the graph theoretic cycle in $\Gamma$.  Taking subscripts mod $r$, we equivalently think of $C$ as the simplicial $1$-chain $\sum_{i=1}^r \pm v_iv_{i+1}$, with signs chosen so that $\partial C = 0$.  We denote by $[C]_{\Gamma}$, or $[C]$ when $\Gamma$ is clear from context, the equivalence class of $C$ in $\HH_1(\Gamma)$.
\begin{lemma}
\label{CycleBasis}
There is a basis for $\HH_1(\Gamma)$ such that every element of the basis is the equivalence class of a simple, chord-free cycle.
\end{lemma}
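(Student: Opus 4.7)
The plan is to show that the classes of simple chord-free cycles span $\HH_1(\Gamma)$, and then extract a basis from this spanning set.

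First I would establish that $\HH_1(\Gamma)$ is spanned by classes of simple cycles. The 1-cycle space $Z_1(\Gamma)$ is determined by the 1-skeleton of $\Gamma$ and, by standard graph theory, admits a basis of fundamental cycles relative to any spanning forest; each such fundamental cycle is simple. Since $\HH_1(\Gamma)$ is a quotient of $Z_1(\Gamma)$, the set $\{[C] : C \text{ a simple cycle in } \Gamma\}$ spans $\HH_1(\Gamma)$.

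Next I would prove, by induction on the length $r$ of a simple cycle $C = (v_1, \ldots, v_r)$, that $[C]$ lies in the span of classes of simple chord-free cycles. The base case $r = 3$ is immediate since a triangle admits no chord. For the inductive step, suppose $r \geq 4$ and $C$ has a chord $v_iv_j$ with $1 \leq i < j \leq r$ and $j - i \neq 1$, $(i,j) \neq (1,r)$. Split $C$ into the two shorter cycles
\[
C_1 = (v_i, v_{i+1}, \ldots, v_j), \qquad C_2 = (v_j, v_{j+1}, \ldots, v_r, v_1, \ldots, v_i),
\]
each of length strictly less than $r$. With the orientations on $C$, $C_1$, $C_2$ chosen so that $\partial C = \partial C_1 = \partial C_2 = 0$, the chord $v_iv_j$ appears with opposite signs in $C_1$ and $C_2$, so $C = C_1 + C_2$ (or $C = C_1 - C_2$) as 1-chains in $\Gamma$. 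Hence $[C] = [C_1] \pm [C_2]$, and by the inductive hypothesis each summand lies in the span of classes of simple chord-free cycles.

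Combining the two steps shows that the classes of simple chord-free cycles span $\HH_1(\Gamma)$; a basis with the stated property is obtained by selecting a maximal linearly independent subset. The only delicate point is the orientation bookkeeping in the splitting argument, which I view as the main technical nuisance: one must verify that, when $C_1$ and $C_2$ are oriented as genuine 1-cycles, the shared edge $v_iv_j$ cancels in the sum $C_1 \pm C_2$. This is a routine sign check and does not depend on the characteristic of $\field$.
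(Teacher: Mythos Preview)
Your proof is correct and follows essentially the same approach as the paper: both arguments split a cycle with a chord $v_iv_j$ into two shorter cycles whose classes sum to the original, and then extract a basis. Your use of fundamental cycles of a spanning forest to guarantee simplicity from the outset, together with the explicit induction on cycle length, is a slightly cleaner organization than the paper's iterative replacement procedure (which also handles non-simple cycles by splitting at repeated vertices), but the underlying idea is identical.
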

\begin{proof}
It is a standard fact in algebraic topology that $\HH_1(\Gamma)$ has a basis of equivalence classes of cycles.  Let $B$ be such a basis.  If $[C] \in B$ is the equivalence class of a non-simple cycle of the form $C = (v_1, \ldots, v_r, v_1, v'_2, \ldots, v'_{r'})$, then replace $[C]$ in $B$ by $[C_1] = [(v_1, \ldots, v_r)]$ and $[C_2] = [v_1, v'_2, \ldots, v'_{r'}]$.  Also, if $C = (v_1, \ldots, v_r)$ and $C$ has a chord $v_iv_j$, then replace $[C]$ by $[C_1] = [(v_1, \ldots, v_i,v_j, \ldots, v_r)]$ and $[C_2] = [(v_i, \ldots, v_j)]$.  Then delete elements from $B$ until $B$ is again a basis for $\HH_1(\Gamma)$.  Repeat this operation until all elements of $B$ are equivalence classes of simple, chord-free cycles.
\end{proof}

\section{Results on $M_{1,d}(n)$ and lemmas}
\label{Hom1}

In this section we prove a linear upper bound on $M_{1,d}(n)$, and we also give some lemmas on the structure of $\HH_1(R(S))$.  Those lemmas are needed to prove results on higher homology.  Before the main theorem of this section, we need a general fact on the homology of simplicial complexes.

\begin{lemma}
\label{LinkLemma}
Consider $v \in S$.  Then for all $p$, $\B_p(R(S)) \leq \B_p(R(S-v)) + \B_{p-1}(\lk(v))$.
\end{lemma}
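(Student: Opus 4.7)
The plan is a direct application of the Mayer--Vietoris sequence to a natural decomposition of $R(S)$ by the vertex $v$.

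First I would write $R(S) = R(S-v) \cup \st(v)$. To justify this, note that any face $F \in R(S)$ either avoids $v$, in which case $F \in R(S-v)$, or contains $v$, in which case $F \cup \{v\} = F \in R(S)$ so $F \in \st(v)$. Next I would identify the intersection: $F \in R(S-v) \cap \st(v)$ iff $v \notin F$ and $F \cup \{v\} \in R(S)$, which by the characterization of $\lk(v)$ from Section~\ref{prelim} is exactly $\lk(v)$ (as a subcomplex of $R(S)$).

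The second ingredient is that $\st(v)$ is a cone with apex $v$ and hence contractible, so $\tilde{H}_p(\st(v)) = 0$ for all $p$. Feeding this into the reduced Mayer--Vietoris sequence for the pair $(R(S-v), \st(v))$ gives the exact sequence
\[
\tilde{H}_p(R(S-v)) \longrightarrow \tilde{H}_p(R(S)) \longrightarrow \tilde{H}_{p-1}(\lk(v)).
\]
By exactness, $\tilde{H}_p(R(S))$ is a quotient of an extension of a subspace of $\tilde{H}_{p-1}(\lk(v))$ by a quotient of $\tilde{H}_p(R(S-v))$, so its dimension is at most the sum of dimensions $\tilde{\beta}_p(R(S-v)) + \tilde{\beta}_{p-1}(\lk(v))$, which is the claimed inequality.

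There is no real obstacle here; the only points to verify carefully are the combinatorial identities $R(S) = R(S-v) \cup \st(v)$ and $R(S-v) \cap \st(v) = \lk(v)$, both of which follow directly from the definitions of star, link, and Rips complex already recorded in Section~\ref{prelim}.
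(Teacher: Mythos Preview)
Your proof is correct and follows exactly the same approach as the paper: decompose $R(S)$ as $R(S-v)\cup\st(v)$, identify the intersection as $\lk(v)$, use that $\st(v)$ is a cone to kill its homology, and apply Mayer--Vietoris. The only difference is that you spell out the verification of the union and intersection identities more explicitly than the paper does.
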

\begin{proof}
Consider $\Delta := R(S-v)$ and $\Delta' = \st_{R(S)}(v)$.  Then $\Delta \cup \Delta' = R(S)$ and $\Delta \cap \Delta' = \lk(v)$.  Since $\Delta'$ is a cone, that is, $v$ is contained in all maximal faces of $\Delta'$, all of its homology groups vanish.  The lemma then follows from the Mayer-Vietoris sequence.
\end{proof}

\begin{theorem}
For every $d$, there exists a constant $C_d$ such that $M_{1,d}(n) \leq C_dn$.
\end{theorem}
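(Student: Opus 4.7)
The plan is to induct on $n = |S|$ using Lemma \ref{LinkLemma}. Specializing that lemma to $p = 1$ gives, for any vertex $v$,
$$\B_1(R(S)) \leq \B_1(R(S-v)) + \B_0(\lk(v)).$$
If we can produce a constant $c_d$, depending only on the ambient dimension $d$, such that $\B_0(\lk(v)) \leq c_d$ for every $v \in S$ and every finite $S \subset \mathbb{R}^d$, then induction yields $\B_1(R(S)) \leq c_d \cdot n$, and the theorem follows with $C_d := c_d$. The base case $n \leq 1$ is trivial since $R(S)$ is then a point or empty.

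The remaining task is purely geometric: we need to bound the number of connected components of $\lk(v) = R(N(v))$. The key observation is that if $u, u'$ lie in distinct components of $R(N(v))$, then $\ds(u,u') > 1$, since otherwise $uu'$ would itself be an edge of the link. Picking one representative from each component therefore produces a finite subset $T$ of the closed unit ball around $v$ whose pairwise distances all strictly exceed $1$. The open balls of radius $1/2$ centered at the points of $T$ are then pairwise disjoint and all contained in the open ball of radius $3/2$ about $v$. Comparing Euclidean volumes gives $|T| \leq (3/2)^d / (1/2)^d = 3^d$, so $\lk(v)$ has at most $3^d$ components and $\B_0(\lk(v)) \leq 3^d - 1$. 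Taking $C_d := 3^d - 1$ completes the induction.

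The proof is short, and I do not expect any serious obstacle beyond verifying the packing step above. The essential feature that makes the case $p = 1$ tractable is that the ``error term'' $\B_{p-1}(\lk(v))$ produced by the Mayer--Vietoris induction collapses to $\B_0(\lk(v))$, a quantity controlled by a crude volume-packing argument. For higher $p$ one would instead need uniform bounds on $\B_{p-1}$ of the link, which no such direct geometric estimate supplies; this is presumably why Sections \ref{Hom2} and \ref{Hom3} of the paper have to work harder and only achieve upper bounds of the form $o(n^p)$ rather than a clean linear bound.
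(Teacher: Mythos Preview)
Your proof is correct and follows essentially the same route as the paper: induct on $|S|$ via Lemma~\ref{LinkLemma}, and bound $\B_0(\lk(v))$ by observing that representatives of distinct components of $\lk(v)$ are pairwise at distance greater than $1$ inside the closed unit ball about $v$. The only difference is cosmetic: the paper defines $C_d$ abstractly as one less than the maximum size of a $1$-separated set in the unit ball, whereas you supply the explicit packing bound $C_d = 3^d - 1$.
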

\begin{proof}
Let $B^d$ be a closed ball of radius $1$ in $\mathbb{R}^d$, and let $$C_d := \max\{|T|: T \subset B^d, \ds(u,v) > 1 \mbox{ for all } u,v \in T\}-1.$$ Choose $v \in S$.  Then $\lk(v) = R(N(v))$ is a Rips complex on a point set contained in a ball of radius $1$.  Suppose that $R(N(v))$ has $k$ connected components with representative vertices $v_1, \ldots, v_k$.  Then for all $1 \leq i < j \leq k$, $\ds(v_i,v_j) > 1$.  Thus $k \leq C_d+1$, and $\B_0(R(N(v))) \leq C_d$.

We prove the theorem by induction on $n$, with the base case $n=0$ evident.  By the inductive hypothesis, $\B_1(R(S-v)) \leq C_d(n-1)$.  Also, $B_0(R(N(v))) \leq C_d$.  The result follows from Lemma \ref{LinkLemma}.
\end{proof}

Our next lemma relates the first Betti number of the clique complex of a certain kind of graph to the zeroth Betti number of a related graph.  In the following, we may think of $X(G)$ as $R(U \sqcup V)$, where $U$ and $V$ are both clusters of points of diameter at most $1$.

\begin{lemma}
\label{BipartiteLemma}
Let $G$ be a graph with vertex set $U \sqcup V$ such that all edges $uu'$ and $vv'$ are in $G$ for $u,u' \in U$, $v,v' \in V$.  Let $G'$ be the bipartite graph on $U \sqcup V$ obtained from $G$ by deleting all $uu'$ and $vv'$ for $u,u' \in U$ and $v,v' \in V$, and then deleting any isolated vertices.  Then $\B_1(X(G)) = \B_0(G')$.  Let $u_iv_i$, $u_i \in U, v_i \in V$, $1 \leq i \leq q$ be a set of representative edges of the $q$ components of $G'$.  Then the cycles $[(u_1,u_i,v_i,v_1)]$ for $2 \leq i \leq q$ can be taken as a basis for $\HH_1(X(G))$.
\end{lemma}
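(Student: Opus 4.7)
My plan is to compute $\HH^1(X(G); \field)$ directly from the cochain complex, exploiting the contractibility of the simplices $\Delta_U$ and $\Delta_V$ induced by the two cliques, and then construct cocycle classes that pair perfectly with the proposed basis cycles $C_i = (u_1, u_i, v_i, v_1)$ for $2 \le i \le q$.

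First I show every $1$-cocycle $\omega$ on $X(G)$ is cohomologous to one vanishing on all clique edges. The restriction $\omega|_{\Delta_U}$ is a cocycle on a contractible simplex, hence exact: $\omega|_{\Delta_U} = \delta f$ for some $f \colon U \to \field$. Extending $f$ by zero on $V$ yields a vertex function whose coboundary reproduces $\omega$ on $U$-edges and vanishes on $V$-edges, so subtracting this coboundary kills the $U$-edge values without perturbing the $V$-edge values. Iterating with $\Delta_V$ then kills the $V$-edge values as well, leaving only cross-edge values to analyze.

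Next, for such an $\omega$, the cocycle condition on a mixed triangle $\{a, a', z\}$ with $a, a' \in U$ and $z \in V$ reduces (since $\omega(aa') = 0$) to $\omega(az) = \omega(a'z)$, and analogously on triangles $\{x, b, b'\}$ with $x \in U$ and $b, b' \in V$. Thus cross edges sharing an endpoint carry equal $\omega$-values, so $\omega$ is constant on each connected component of $G'$; conversely any such component-wise constant assignment is a valid cocycle. Coboundaries $\delta h$ that also vanish on clique edges require $h$ constant on $U$ and constant on $V$, yielding a single constant value on every cross edge. Hence $\HH^1(X(G); \field) \cong \field^{q-1}$, and a basis is given by $\{[\omega_j]\}_{j=2}^{q}$, where $\omega_j$ is $1$ on cross edges in component $j$ and $0$ elsewhere.

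Finally, I orient the edges and compute the Kronecker pairing $\langle [\omega_j], [C_i] \rangle$. The two clique edges $u_1 u_i$ and $v_1 v_i$ of $C_i$ contribute $0$, while the cross edges $u_i v_i$ (in component $i$) and $u_1 v_1$ (in component $1$) contribute with opposite signs, giving $\langle [\omega_j], [C_i] \rangle = \delta_{ij} - \delta_{j,1}$. For $i, j \in \{2, \ldots, q\}$ this matrix is the identity, so $\{[C_i]\}_{i=2}^{q}$ is dual to the basis $\{[\omega_j]\}_{j=2}^{q}$ of $\HH^1$, and therefore forms a basis of $\HH_1(X(G); \field)$ by universal coefficients, giving $\B_1(X(G)) = q - 1 = \B_0(G')$. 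The main technical point is the cocycle-propagation step of the previous paragraph, which relies essentially on the flag structure of $X(G)$: the triangle $\{a, a', z\}$ is automatically a face whenever $a, a' \in U$ both have cross edges to $z$, since $aa'$ is already a clique edge.
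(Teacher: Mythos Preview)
Your argument is correct and takes a genuinely different route from the paper's proof.  The paper proceeds by induction on the number $q$ of components of $G'$: it disposes of $q=0$ trivially, handles $q=1$ by an edge-by-edge rerouting argument showing every cycle is null-homologous, and for $q\ge 2$ peels off one component at a time via the Mayer--Vietoris sequence, reading off both the dimension and the basis from the connecting homomorphism.  You instead compute $\HH^1(X(G))$ directly: normalize each cocycle to vanish on the two clique simplices, observe that the cocycle condition on mixed triangles (guaranteed to exist by the flag property, as you note) forces the cross-edge values to be constant on components of $G'$, and quotient by the one-dimensional space of global constants.  The explicit Kronecker pairing with the cycles $C_i$ then identifies the dual basis.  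Your approach is more uniform (no separate $q=1$ case, no induction) and arguably cleaner, at the cost of invoking cohomology and the duality $\HH^1\cong(\HH_1)^\ast$ over a field; the paper's approach stays entirely within homology and reuses the Mayer--Vietoris machinery that drives the rest of the paper.  One small remark: your formula $\HH^1\cong\field^{q-1}$ tacitly assumes $q\ge 1$; when $q=0$ there are no cross edges and both $Z^1_0$ and $B^1_0$ in your normalization are zero, so $\HH^1=0$ as required.
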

\begin{proof}
Suppose that $G'$ has $q$ connected components.  We show that $\B_1(X(G)) = \B_0(G') = \max\{0,q-1\}$ by induction on $q$.  In the case that $q=0$, $X(G)$ is the disjoint union of simplices on $U$ and $V$, and so $\B_1(X(G)) = 0$.

Next we show that $\B_1(X(G)) = 0$ if $q=1$.  Enumerate the edges of $G'$ by $e_1, \ldots, e_z$ in such a way that for all $i > 1$, $e_i$ shares an endpoint with some previous edge.  For all $i$, construct $G_i$ from $G$ by removing $e_{i+1},\ldots,e_z$ from $G$.  Note that $G_z = G$.  Since $X(G_1)$ consists of two disjoint simplices connected by a single edge, $\B_1(X(G_1))=0$.  We show by induction on $i$ that $\B_1(X(G_i))=0$ for all $i$, and in particular that $\B_1(X(G)) = 0$.

Let $C$ be a graph theoretic cycle in $G_i$ and consider $[C]_{X(G_i)}$ for $i > 1$.  Let $e_i = uv$ for $u \in U, v \in V$, and suppose without loss of generality (perhaps by switching the roles of $U$ and $V$) that $G_i$ contains an edge $uv'$ for some $v \neq v' \in V$.  This assumption is valid by the assumption that $e_i$ shares an endpoint with $e_j$ for some $j < i$.  If $C$ contains $uv$, let $C'$ be the cycle obtained by replacing $uv$ in $C$ by the two edges $uv', v'v$.  Otherwise, set $C' := C$.  Since $C'$ avoids $e_i$, $C'$ is a cycle in $G_{i-1}$.  Then $[C']_{X(G_{i-1})} = 0$ by the inductive hypothesis, and hence $[C']_{X(G_i)} = 0$ by $X(G_{i-1}) \subset X(G_i)$.  We have $uvv' \in X(G_i)$ by the flag property, and so $[C]_{X(G_i)}=[C']_{X(G_i)} = 0$.  This proves that $\B_1(X(G_i)) = 0$.

Now suppose that $q \geq 2$, and let $W$ be the vertex set of a component of $G'$.  Let $\tilde{G}$ be obtained from $G$ by removing the edges of $G$ with one endpoint in $U \cap W$ and the other in $V \cap W$.  Set $\Delta := X(\tilde{G})$.  Then $\Delta$ is connected and satisfies $\B_1(\Delta) = q-2$ by the inductive hypothesis.  Set $\Delta' := X(G)[W]$.  By the $q=1$ case, $\B_1(\Delta') = 0$, and also $\Delta'$ is connected.  We also have that $\Delta \cup \Delta' = X(G)$ and $\Delta \cap \Delta'$ is the disjoint union of simplices on $W \cap U$ and $W \cap V$.  Hence $\B_0(\Delta \cap \Delta') = 1$ and all other Betti numbers of $\Delta \cap \Delta'$ vanish.  Apply the portion of the Mayer-Vietoris sequence with components $\Delta$ and $\Delta'$ $$0 \rightarrow \HH_1(\Delta) \stackrel{\phi}{\rightarrow} \HH_1(X(G)) \stackrel{\partial}{\rightarrow} \HH_0(\Delta \cap \Delta') \rightarrow 0$$ to conclude that $\B_1(X(G)) = q-1$.

Now we prove that the cycles $[(u_1,u_i,v_i,v_1)]$ for $2 \leq i \leq q$ can be taken as a basis for $\HH_1(X(G))$ by induction on $q$, with the cases $q=0$ and $q=1$ trivial.  Assume that $u_q,v_q \in W$, with $W$ as above.  Note that the homology groups in the above Mayer-Vietoris sequence are vector spaces, and hence the sequence splits.  Since the inclusion-induced map $\phi$ is injective, the set of cycles $\{[(u_1,u_i,v_i,v_1)]\}$ for $2 \leq i \leq q-1$ is a basis for $\Omega_1(X(G),\Delta)$.  Also, by the structure of the connecting homomorphism, $\partial([(u_1,u_q,v_q,v_1)]) = \pm [v_q - u_q]$ is a nonzero element of $\HH_0(\Delta \cap \Delta')$.  This proves the result.
\end{proof}

\begin{corollary}
\label{BipartiteGen2}
Let all quantities be as in Lemma \ref{BipartiteLemma}, and suppose that $X(G)$ is an induced subcomplex of some larger complex $\K$.  Then there exists an edge set $\{u_iv_i: u_i \in U, v_i \in V, 1 \leq i \leq q'\}$ for some $q' \leq q$, such that each edge is in a different component of $G'$ and the set of cycles $\{[(u_1,u_i,v_i,v_1)]\}$ for $2 \leq i \leq q'$ is a basis for $\Omega_1(\K,X(G))$.
\end{corollary}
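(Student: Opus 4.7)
The plan is to invoke Lemma \ref{BipartiteLemma} and then pare the basis it produces down to a linearly independent spanning subset of $\Omega_1(\K, X(G))$ by a standard linear-algebra argument. I would first fix arbitrary representative edges $u_1v_1, \ldots, u_qv_q$, one per connected component of $G'$. By Lemma \ref{BipartiteLemma}, the classes $[(u_1, u_i, v_i, v_1)]_{X(G)}$ for $2 \leq i \leq q$ form a basis of $\HH_1(X(G))$.

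Let $\phi : \HH_1(X(G)) \rightarrow \HH_1(\K)$ denote the map induced by the inclusion $X(G) \hookrightarrow \K$. By definition, $\Omega_1(\K, X(G)) = \im \phi$, so the images $\phi([(u_1, u_i, v_i, v_1)])$ for $2 \leq i \leq q$ span $\Omega_1(\K, X(G))$. I would then extract a maximal linearly independent subset from these $q-1$ generators. Permuting the components of $G'$ (while keeping $u_1v_1$ as the base edge), the indices of the surviving cycles may be taken to be $\{2, \ldots, q'\}$ for some $q' \leq q$ with $q' - 1 = \dim \Omega_1(\K, X(G))$. The corresponding edges $u_1v_1, \ldots, u_{q'}v_{q'}$ then lie in $q'$ distinct components of $G'$, and the cycles $[(u_1, u_i, v_i, v_1)]$ for $2 \leq i \leq q'$ constitute a basis of $\Omega_1(\K, X(G))$, as required.

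No step in this plan is a genuine obstacle; the corollary is essentially a linear-algebra coda to Lemma \ref{BipartiteLemma}. The only detail worth verifying is that each selected cycle actually lies in $X(G)$, but this is immediate: the edges $u_1u_i$ and $v_1v_i$ belong to $G$ by the assumption that all within-$U$ and within-$V$ pairs are edges, while $u_1v_1$ and $u_iv_i$ are edges of $G'$ and hence of $G$. By flagness, the four-cycle bounds a well-defined homology class in $X(G)$.
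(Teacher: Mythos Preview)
Your proposal is correct and is essentially a more detailed version of the paper's own one-line proof, which simply says to take the cycles from Lemma~\ref{BipartiteLemma} and reduce them to a linearly independent set in $\Omega_1(\K,X(G))$ with the same span. The only quibble is the last sentence: flagness plays no role in verifying that $(u_1,u_i,v_i,v_1)$ is a cycle in $X(G)$---you already checked all four edges lie in $G$, which is all that is needed.
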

\begin{proof}
Take the set of cycles from Lemma \ref{BipartiteLemma} and reduce it to a linearly independent set in $\Omega_1(\K,X(G))$ with the same span.
\end{proof}

Now we begin constructing our regular form of a basis for $\HH_1(R(S))$.  For a given $\epsilon > 0$, we partition $\mathbb{R}^d$ into $\epsilon$-cubes.  We say that $K \subset \mathbb{R}^d$ is an $\epsilon$-\textit{cube} if there exist integers $m_1, \ldots, m_d$ such that $K$ is the product of half-open intervals $[m_1 \epsilon, (m_1+1)\epsilon) \times \ldots \times [m_d \epsilon, (m_d+1)\epsilon)$.  If $\epsilon \leq d^{-1/2}$ and $S$ is a finite subset of some $\epsilon$-cube $K$, then $R(S)$ is a simplex.

The next lemma gives our first form for a basis of $\HH_1(R(S))$.  Call a basis of the prescribed form $C_{d,r,\epsilon}$-\textit{regular}.

\begin{lemma}
\label{RegularGenerators}
Let $S$ be a finite subset of $\mathbb{R}^d$ contained in a ball $D$ of radius $r$, and fix $\epsilon \leq d^{-1/2}$.  Then there exists a constant $C_{d,r,\epsilon}$, which depends only on $d$, $r$, and $\epsilon$, such that the following holds.  There exists a basis of $\HH_1(R(S))$ such that all but at most $C_{d,r,\epsilon}$ of the basis elements are of the form $[(u,u',v',v)]$, where $u$ and $u'$ are in the same $\epsilon$-cube, and $v$ and $v'$ are in the same $\epsilon$-cube.
\end{lemma}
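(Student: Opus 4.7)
The plan is to build up $R(S)$ piece by piece, where each piece is supported on at most two $\epsilon$-cubes, and to apply Lemma~\ref{BipartiteLemma} to each piece to produce basis classes of the prescribed $4$-cycle form.

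First, I would enumerate the $\epsilon$-cubes meeting $D$ as $K_1,\ldots,K_N$ and set $S_i := S \cap K_i$. Since $D$ has fixed radius $r$ and the cubes have fixed side $\epsilon$, the number $N$ depends only on $d, r, \epsilon$; moreover, because $\epsilon \leq d^{-1/2}$, each $R(S_i)$ is a simplex. Enumerate those pairs $\{i_k, j_k\}$, $1 \leq k \leq P$, for which there is at least one edge of $R(S)$ between $S_{i_k}$ and $S_{j_k}$. Then $P \leq \binom{N}{2}$ is likewise bounded in terms of $d, r, \epsilon$.

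Next I would define a filtration by $\Gamma^{(0)} := \bigsqcup_i R(S_i)$ and $\Gamma^{(k)} := \Gamma^{(k-1)} \cup R(S_{i_k} \cup S_{j_k})$. A direct check shows $\Gamma^{(k-1)} \cap R(S_{i_k} \cup S_{j_k}) = R(S_{i_k}) \sqcup R(S_{j_k})$: any cross-cube edge is only introduced at the step processing its own pair, so no face mixing $S_{i_k}$ and $S_{j_k}$ sits in $\Gamma^{(k-1)}$. By Lemma~\ref{BipartiteLemma}, $\HH_1(R(S_{i_k} \cup S_{j_k}))$ has a basis of cycles of exactly the desired form. The Mayer--Vietoris sequence for $\Gamma^{(k-1)} \cup R(S_{i_k} \cup S_{j_k})$ then exhibits $\HH_1(\Gamma^{(k)})$ as an extension of a cokernel of dimension at most $1$ by $\HH_1(\Gamma^{(k-1)}) \oplus \HH_1(R(S_{i_k} \cup S_{j_k}))$; the bound on the cokernel comes from computing the kernel of the map $\HH_0\bigl(R(S_{i_k}) \sqcup R(S_{j_k})\bigr) \to \HH_0(\Gamma^{(k-1)}) \oplus \HH_0(R(S_{i_k} \cup S_{j_k}))$, which is at most one-dimensional since $R(S_{i_k}\cup S_{j_k})$ is connected. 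Iterating over $k$ gives a basis of $\HH_1(\Gamma^{(P)})$ in which all but at most $P$ elements are of the desired $4$-cycle form.

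Finally, since every edge of $R(S)$ uses only two vertices, it lies in some $R(S_i \cup S_j)$; hence $\Gamma^{(P)}$ and $R(S)$ share the same $1$-skeleton, and the inclusion induces a surjection $\HH_1(\Gamma^{(P)}) \to \HH_1(R(S))$. Taking the images of the $4$-cycle basis elements, extracting a maximal linearly independent subset, and extending by at most $P$ of the remaining images produces a basis of $\HH_1(R(S))$ with at most $P$ elements not of the desired form, so one may set $C_{d,r,\epsilon} := P$. The main technical obstacle is the Mayer--Vietoris bookkeeping: verifying the intersection identity at every stage and pinning down the size of the connecting-homomorphism contribution to a uniform bound.
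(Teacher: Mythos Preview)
Your argument is correct but proceeds quite differently from the paper's. The paper begins with a basis of $\HH_1(R(S))$ consisting of simple, chord-free cycles (Lemma~\ref{CycleBasis}), observes that any such cycle can meet each $\epsilon$-cube in at most two vertices (otherwise it would have a chord) and hence has length at most $2\kappa$, and then groups these basis cycles by a ``nearness'' equivalence: two cycles are near if their vertices occupy the same sequence of $\epsilon$-cubes. There are at most $C_{d,r,\epsilon}=\sum_{i=1}^{2\kappa}\kappa^i$ nearness classes; whenever two non-$\epsilon$-simple basis cycles $C=(v_1,\ldots,v_k)$ and $C'=(v_1',\ldots,v_k')$ fall in the same class, one rewrites $[C']=[C]+\sum_i[(v_i',v_{i+1}',v_{i+1},v_i)]$ and swaps $[C']$ for these $\epsilon$-simple $4$-cycles, then prunes to a basis. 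Iterating leaves at most one non-$\epsilon$-simple element per nearness class.

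Your route---filter by two-cube subcomplexes $R(S_{i_k}\cup S_{j_k})$, read off $4$-cycle generators from Lemma~\ref{BipartiteLemma}, control the connecting homomorphism at each step, and finally push forward along the surjection $\HH_1(\Gamma^{(P)})\to\HH_1(R(S))$---is cleaner bookkeeping and in fact yields the sharper constant $\binom{N}{2}$ rather than $\sum_{i=1}^{2\kappa}\kappa^i$. The paper's argument, by contrast, is more elementary in that it never leaves $R(S)$, does not invoke Mayer--Vietoris, and does not rely on Lemma~\ref{BipartiteLemma} at this stage; it works purely by explicit cycle manipulations inside the given basis. Your approach is closer in spirit to what the paper does one lemma later (Lemma~\ref{RegGen2}), where the subspace $\sum_{i,j}\Omega_1(R(S),R(S\cap(K_i\cup K_j)))$ is analysed directly via Corollary~\ref{BipartiteGen2}.
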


If a cycle $C = (u,u',v',v)$ satisfies the condition that $u$ and $u'$ are in the same $\epsilon$-cube, and $v$ and $v'$ are in the same $\epsilon$-cube, then we say that $C$ is $\epsilon$-\textit{simple}.

\begin{proof} There is a set $\mathcal{K} = \{K_1, \ldots, K_\kappa\}$ of $\kappa := (\lceil 2r/\epsilon \rceil +1)^d$ $\epsilon$-cubes that cover $S$.  Choose a basis $B$ of $\HH_1(R(S))$ so that each basis element is the equivalence class of a simple, chord-free graph theoretic cycle in $R(S)$, as allowed by Lemma \ref{CycleBasis}.  Given three points $u,v,w \in S \cap K_i$ for some $i$, $R(u,v,w)$ is a simplex.  Hence, given a cycle $[C] \in B$, $C$ contains at most $2$ vertices in $K_i$, which implies that $C$ contains at most $2 \kappa$ vertices in total.  We say that two cycles $C$ and $C'$ are \textit{near} each other if, by labeling vertices appropriately, $C=(v_1,\ldots,v_k)$, $C'=(v_1',\ldots,v_k')$, and for all $i$, $v_i$ and $v_i'$ are in the same $\epsilon$-cube.  Nearness is an equivalence relation.  There are at most $C_{d,r,\epsilon} := \sum_{i=1}^{2\kappa}\kappa^i$ nearness equivalence classes for simple, chord-free cycles of length at most $2 \kappa$ in $D$.

Suppose that $[C]=[(v_1,\ldots,v_k)] \in B$ and $[C']=[(v_1',\ldots,v_k')] \in B$ are near each other and are not $\epsilon$-simple.  The following subscripts are understood mod $k$.  Then $[C'] = [C] + \sum_{i=1}^k [(v_i',v_{i+1}',v_{i+1},v_i)]$.  Remove $[C']$ from $B$ and add each of the $[(v_i',v_{i+1}',v_{i+1},v_i)]$ to $B$.  Then reduce $B$ to a basis for $\HH_1(R(S))$ by removing elements that are linear combinations of other elements.  After this reduction, all elements of $B$ are equivalence classes of simple, chord-free cycles; the reason is that if $(v_i',v_{i+1}',v_{i+1},v_i)$ has a chord, then $[(v_i',v_{i+1}',v_{i+1},v_i)] = 0$ by the fact that $R(S)$ is flag.  This operation strictly decreases the number of non-$\epsilon$-simple elements of $B$ while maintaining the span of $B$.  Repeat this operation as many times as possible; then $B$ contains at most $C_{d,r,\epsilon}$ non-$\epsilon$-simple generators.
\end{proof}

We further refine our basis for $\HH_1(R(S))$.  Let $K$ be a distinguished $\epsilon$-cube and $W := K \cap S$.  We say that a basis $B$ for $\HH_1(R(S))$ is $W$-\textit{regular} if all but $C_{d,r,\epsilon}+{\kappa \choose 2}$ elements $[C] \in B$ are of one of the following two forms. \newline
1) $C = (w,w',v',v)$ with $w,w' \in W$ and $v,v'$ in the same $\epsilon$-cube. \newline
2) $C = (u,u',v',v)$ with $u,u'$ in the same $\epsilon$-cube and $v,v'$ in the same $\epsilon$-cube, and furthermore there is no face $wuv$ or $wu'v'$ for any $w \in W$.

\begin{lemma}
\label{RegGen2}
Let $S$ be as in Lemma \ref{RegularGenerators}, and let $W$ be the intersection of a fixed $\epsilon$-cube with $S$.  Then $\HH_1(R(S))$ has a $W$-regular basis.
\end{lemma}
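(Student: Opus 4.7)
I would refine the basis from Lemma~\ref{RegularGenerators} in two stages.  Apply that lemma to obtain a $C_{d,r,\epsilon}$-regular basis $B$ of $\HH_1(R(S))$; the at most $C_{d,r,\epsilon}$ non-$\epsilon$-simple elements of $B$ are retained unchanged as part of the allowed exceptions.  Every remaining element of $B$ is an $\epsilon$-simple $4$-cycle $(u,u',v',v)$, which I associate with the unordered pair $(K',K'')$ of $\epsilon$-cubes containing $\{u,u'\}$ and $\{v,v'\}$ (necessarily $K' \ne K''$, else the cycle bounds in a simplex).  If this pair involves $K$, then two of the vertices lie in $W$ and the cycle is already of form~1 and is kept.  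The remaining task is to modify, pair by pair, the $4$-cycles associated with pairs $(K',K'')$ where both cubes differ from $K$, introducing at most one additional exception per such pair and hence at most $\binom{\kappa}{2}$ in total.

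Fix such a pair, and write $U' = K' \cap S$, $U'' = K'' \cap S$.  Applying Lemma~\ref{BipartiteLemma} to $R(S)[U' \cup U'']$ followed by Corollary~\ref{BipartiteGen2} gives a basis of $\Omega_1(R(S), R(S)[U'\cup U''])$ consisting of $4$-cycles $[(u_1, u_a, v_a, v_1)]$, with $u_a v_a$ a representative edge of a distinct component of the bipartite graph $G'$ on $U' \sqcup U''$.  Call a bipartite edge $uv$ \emph{$W$-good} if no $w \in W$ makes $\{w,u,v\}$ a face of $R(S)$, and \emph{$W$-bad} otherwise.  A basis $4$-cycle $(u_1, u_a, v_a, v_1)$ is of form~2 precisely when both $u_1 v_1$ and $u_a v_a$ are $W$-good.

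The central observation, verified by a direct boundary computation, is that whenever two bipartite edges $uv$ and $u'v'$ share a common witness $w \in W$ (meaning both $\{w,u,v\}$ and $\{w,u',v'\}$ are faces of $R(S)$), the $4$-cycle $(u,u',v',v)$ is the boundary of the $2$-chain $\{w,u,u'\} + \{w,u,v\} + \{w,u',v'\} + \{w,v,v'\}$; the first and last summands are faces by the flag property, using that $u,u'$ share the cube $K'$ and $v,v'$ share $K''$.  This allows me to merge components of $G'$ that share a $W$-witness at no homological cost.  For each merged class that contains at least one $W$-good edge I pick a $W$-good representative, obtaining form-2 basis cycles; I choose the baseline $u_1 v_1$ to be $W$-good whenever possible, and keep a single $W$-bad $4$-cycle per pair as the allowed exception, absorbing the remaining $W$-bad classes into form-1 generators that bridge through $K$.

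The main obstacle I anticipate is justifying rigorously that a single exception per pair suffices --- that is, that beyond one $W$-bad merged class, any further $W$-bad class is expressible via form-1 cycles together with the other form-2 cycles.  I would address this by a secondary Mayer-Vietoris analysis of $R(S)[W \cup U' \cup U'']$, decomposed as $R(S)[W \cup U'] \cup R(S)[W \cup U'']$ with intersection the simplex $R(W)$, applying Lemma~\ref{BipartiteLemma} to each of the two bipartite pieces to identify the form-1 contribution.  Verifying that the resulting generators assemble into a genuine basis of $\HH_1(R(S))$ across all $\binom{\kappa}{2}$ cube pairs, with the exceptions disjoint from the form-1 and form-2 generators, is where the proof will require the most careful bookkeeping.
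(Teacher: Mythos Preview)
Your overall architecture matches the paper's: start from the $C_{d,r,\epsilon}$-regular basis, work cube-pair by cube-pair via Corollary~\ref{BipartiteGen2}, and show that for each pair $(K_i,K_j)$ with $i,j \neq 1$ at most one exceptional generator survives. Your observation that two $W$-bad edges sharing a \emph{common} witness $w$ yield a null-homologous $4$-cycle is exactly one of the two ingredients the paper uses.

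The gap is the complementary case, which you flag yourself: two $W$-bad representative edges $u_kv_k$ and $u_{k'}v_{k'}$ with \emph{distinct} witnesses $w,w' \in W$. Here the paper uses the explicit identity
\[
[(u_k,v_k,v_{k'},u_{k'})] \;=\; [(u_{k'},u_k,w,w')] \;+\; [(v_k,v_{k'},w',w)],
\]
obtained by inserting $w$ between $u_k,v_k$ and $w'$ between $u_{k'},v_{k'}$ (using the faces $wu_kv_k$ and $w'u_{k'}v_{k'}$) and then cutting the resulting $6$-cycle along the chord $ww'$, which exists since $w,w'$ lie in the same $\epsilon$-cube $K$. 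Both summands are form-1 cycles, so one replaces the offending basis element by these two and re-reduces to a linearly independent set; this strictly decreases the number of generators avoiding $W$. After iterating, your same-witness observation shows no two surviving $W$-bad edges can remain in a given $B_{i,j}$, so each pair contributes at most one exception. This single chain-level identity is all that is needed and replaces your proposed secondary Mayer--Vietoris step entirely.

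That Mayer--Vietoris step, as you wrote it, would not work anyway: the union $R(S)[W \cup U'] \cup R(S)[W \cup U'']$ omits every face meeting both $U'$ and $U''$, so it is not $R(S)[W \cup U' \cup U'']$, and the resulting long exact sequence does not see the bipartite $U'$--$U''$ cycles you are trying to control.
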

\begin{proof}
Let $\mathcal{K} = \{K_1, \ldots, K_\kappa\}$ be a set of $\epsilon$-cubes that cover the points of $S$, with $K=K_1$ if $W \neq \emptyset$.  By Lemma \ref{RegularGenerators}, the equivalence classes of $\epsilon$-simple cycles in $R(S)$ span a subspace $\Omega$ of $\HH_1(R(S))$ with $\dim(\Omega) \geq \B_1(R(S)) - C_{r,d,\epsilon}$.  It is clear that $$\Omega \subseteq \sum_{i,j}\Omega_1(R(S),R(S \cap (K_i \cup K_j))),$$ and since each $\Omega_1(R(S),R(S \cap (K_i \cup K_j)))$ is spanned by $\epsilon$-simple cycles by Corollary \ref{BipartiteGen2}, $$\Omega \supseteq \sum_{i,j}\Omega_1(R(S),R(S \cap (K_i \cup K_j))).$$ We first construct a basis $B$ for $\Omega$ as follows.  By Corollary \ref{BipartiteGen2}, for all $1 \leq i < j \leq \kappa$ we may choose integers $p_{i,j}$ and a basis $B_{i,j}$ for $\Omega_1(R(S),R(S \cap (K_i \cup K_j)))$ given by $\{[(u^{i,j}_1,v^{i,j}_1,v^{i,j}_k,u^{i,j}_k)], 2 \leq k \leq p_{i,j}\}$ with the properties prescribed in Corollary \ref{BipartiteGen2}.  Then let $B$ be a linearly independent subset of $\cup_{1 \leq i < j \leq \kappa}B_{i,j}$ with the same span.  If $W = \emptyset$, then an extension of $B$ to a basis for $\HH_1(R(S))$ is $W$-regular, as every element of $B$ is satisfies the second condition in the definition of a $W$-regular basis.  So now suppose that $W \neq \emptyset$.

Suppose that there exist distinct $w, w' \in W$ so that for some $1 \leq k < k' \leq p_{i,j}$, there exist faces $wu^{i,j}_kv^{i,j}_k$ and $w'u^{i,j}_{k'}v^{i,j}_{k'}$.  Then $$[(u^{i,j}_k,v^{i,j}_k,v^{i,j}_{k'},u^{i,j}_{k'})] = [(u^{i,j}_k,w,v^{i,j}_k,v^{i,j}_{k'},w',u^{i,j}_{k'})].$$ By the existence of the edge $ww'$, this is $[(u^{i,j}_{k'},u^{i,j}_{k},w,w')] + [(v^{i,j}_{k},v^{i,j}_{k'},w',w)]$.  Then replace $[(u^{i,j}_1,v^{i,j}_1,v^{i,j}_{k'},u^{i,j}_{k'})]$ with $[(u^{i,j}_{k'},u^{i,j}_{k},w,w')]$ and $[(v^{i,j}_{k},v^{i,j}_{k'},w',w)]$ in $B$, and then remove elements from $B$ until the set is linearly independent with the same span.  This operation does not decrease $|B|$, and it strictly decreases the number of elements $[C] \in B$ such that $C$ does not have a vertex in $W$.  Redefine variables so that $B_{i,j}$ is again of the form $\{(u^{i,j}_1,v^{i,j}_1,v^{i,j}_k,u^{i,j}_k)$, $2 \leq k \leq p_{i,j}\}$ for a new value of $p_{i,j}$.  Repeat this operation as many times as possible.

Also, there cannot exist $w \in W$ such that there are faces $wu^{i,j}_kv^{i,j}_k$ and $wu^{i,j}_{k'}v^{i,j}_{k'}$ for $k \neq k'$, since in that case faces $wu^{i,j}_ku^{i,j}_{k'}$ and $wv^{i,j}_kv^{i,j}_{k'}$ also exist and $[(u^{i,j}_k,v^{i,j}_k,v^{i,j}_{k'},u^{i,j}_{k'})]=0$.  If $k=1$, this violates the basis assumption.  If $k>1$,
\begin{eqnarray*}
& & [(u^{i,j}_1,v^{i,j}_1,v^{i,j}_{k},u^{i,j}_{k})] = \\
& & [(u^{i,j}_1,v^{i,j}_1,v^{i,j}_{k},u^{i,j}_{k})] + [(u^{i,j}_k,v^{i,j}_k,v^{i,j}_{k'},u^{i,j}_{k'})] = \\
& & [(u^{i,j}_1,v^{i,j}_1,v^{i,j}_{k},v^{i,j}_{k'},u^{i,j}_{k'},u^{i,j}_{k})].
\end{eqnarray*}
By the existence of faces $u_1^{i,j}u_k^{i,j}u_{k'}^{i,j}$ and $v_1^{i,j}v_k^{i,j}v_{k'}^{i,j}$, this implies that $[(u^{i,j}_1,v^{i,j}_1,v^{i,j}_{k},u^{i,j}_{k})] = [(u^{i,j}_1,v^{i,j}_1,v^{i,j}_{k'},u^{i,j}_{k'})]$, also a contradiction to the basis assumption.

We conclude that for each fixed pair $(i,j)$, there exists at most one value of $k$ such that there exists $w \in W$ and a face $wu^{i,j}_kv^{i,j}_k$.  If such a face exists and $k \neq 1$, then remove $[(u^{i,j}_1,v^{i,j}_1,v^{i,j}_{k},u^{i,j}_{k})]$ from $B$.  If $k=1$, note that $$[(u^{i,j}_2,v^{i,j}_2,v^{i,j}_{k'},u^{i,j}_{k'})] = -[(u^{i,j}_1,v^{i,j}_1,v^{i,j}_{2},u^{i,j}_{2})] + [(u^{i,j}_1,v^{i,j}_1,v^{i,j}_{k'},u^{i,j}_{k'})]$$ by the existence of faces $u^{i,j}_1u^{i,j}_2u^{i,j}_{k'}$ and $v^{i,j}_1v^{i,j}_2v^{i,j}_{k'}$.  Then replacing $[(u^{i,j}_1,v^{i,j}_1,v^{i,j}_{k'},u^{i,j}_{k'})]$ by $[(u^{i,j}_2,v^{i,j}_2,v^{i,j}_{k'},u^{i,j}_{k'})]$ for all $k' > 2$ and removing $[(u^{i,j}_1,v^{i,j}_1,v^{i,j}_{2},u^{i,j}_{2})]$ decreases $|B|$ by $1$ and preserves linear independence of $B$.  Doing this for all $1 \leq i < j \leq r$, $|B|$ decreases by at most ${\kappa \choose 2}$.  Then extend $B$ to a basis for $\HH_1(R(S))$.  This proves the result.
\end{proof}

We need yet another refinement of our basis.  We say that $B$ is a $W$-\textit{strongly regular} basis if the following holds.  For every pair of $\epsilon$-cubes $K_i$ and $K_j$ such that $R(S)$ has an edge with one endpoint in $K_i$ and another in $K_j$, choose a distinguished edge $u^{i,j}v^{i,j}$ with $u^{i,j} \in K_i, v^{i,j} \in K_j$.  Then all but $C_{d,r,\epsilon}+{\kappa \choose 2}$ elements of $B$ satisfy one of the two conditions in the definition of a $W$-regular basis and are also of the form $[(u^{i,j},v^{i,j},v',u')]$ for some $u' \in K_i$ and $v' \in K_j$.  Next we verify that $\HH_1(R(S))$ has a $W$-strongly regular basis.

\begin{lemma}
\label{RegGen3}
Let $S$ be as in Lemma \ref{RegularGenerators}, and let $W$ be the intersection of fixed $\epsilon$-cube with $S$.  Then $\HH_1(R(S))$ has a $W$-strongly regular basis.
\end{lemma}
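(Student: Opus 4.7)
The plan is to produce a $W$-strongly regular basis almost for free from the $W$-regular basis $B$ supplied by Lemma~\ref{RegGen2}, by choosing the distinguished edges $u^{i,j}v^{i,j}$ to coincide with the ``base'' edges that the proof of Lemma~\ref{RegGen2} already privileges. No modification of $B$ itself will be needed.

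Concretely, I would revisit the output of Lemma~\ref{RegGen2}. For each pair $(K_i,K_j)$ that contributes basis elements to $B$, the manipulations in that proof leave behind a common base edge $u^{i,j}_\ast v^{i,j}_\ast$ (playing the role of $u^{i,j}_1 v^{i,j}_1$, or, in the case when index $1$ was the bad one, of $u^{i,j}_2 v^{i,j}_2$), and all basis elements contributed by the pair take the form $[(u^{i,j}_\ast,v^{i,j}_\ast,v^{i,j}_k,u^{i,j}_k)]$. The construction in Lemma~\ref{RegGen2} explicitly arranges that no $w\in W$ forms a face with $u^{i,j}_\ast v^{i,j}_\ast$, and that for every $k$ retained in the basis no $w\in W$ forms a face with $u^{i,j}_k v^{i,j}_k$. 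I then declare $u^{i,j}v^{i,j} := u^{i,j}_\ast v^{i,j}_\ast$ for each pair that contributes to $B$, and for any pair with a $K_i$-$K_j$ edge in $R(S)$ but no contribution I pick $u^{i,j}v^{i,j}$ to be any such edge.

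Under these choices, each non-exceptional basis element is of the form $[(u^{i,j},v^{i,j},v^{i,j}_k,u^{i,j}_k)]$, which directly matches the prescribed form $[(u^{i,j},v^{i,j},v',u')]$ with $v'=v^{i,j}_k\in K_j$ and $u'=u^{i,j}_k\in K_i$. Traversing the same cycle in the opposite direction as $(u^{i,j},u^{i,j}_k,v^{i,j}_k,v^{i,j})$ puts it into the format $(u,u',v',v)$ used in the two $W$-regularity conditions. When $K_i$ is the cube $K$ containing $W$, both $u^{i,j}$ and $u^{i,j}_k$ lie in $W$ and the cycle is of type~1; otherwise it is of type~2, and the required no-face conditions at the two cross-edges $u^{i,j}v^{i,j}$ and $u^{i,j}_k v^{i,j}_k$ are inherited directly from Lemma~\ref{RegGen2}. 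The exceptional elements of $B$ are untouched, so their count is still at most $C_{d,r,\epsilon}+\binom{\kappa}{2}$.

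The only real subtlety, and thus the main obstacle, is checking that the base edge handed down by Lemma~\ref{RegGen2} is genuinely a legal distinguished edge, in the sense that no $w\in W$ yields a face with it. This is precisely the outcome of the case analysis inside the proof of Lemma~\ref{RegGen2}, where the at most one bad index per pair is either removed from the basis or shifted out of the base-edge position. Once that point is in hand, the rest of the argument is pure bookkeeping and there is no new combinatorial content to overcome.
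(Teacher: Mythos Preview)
Your approach works for the pairs $(K_i,K_j)$ with $i,j>1$: there the surviving basis elements from Lemma~\ref{RegGen2} do indeed share a common base edge (either $u^{i,j}_1v^{i,j}_1$ or, after the final swap, $u^{i,j}_2v^{i,j}_2$), and the paper uses exactly this observation for that range of indices. The gap is in the pairs $(K_1,K_j)$. Recall that the replacement step inside Lemma~\ref{RegGen2} takes an element $[(u^{i,j}_1,v^{i,j}_1,v^{i,j}_{k'},u^{i,j}_{k'})]$ with $i,j>1$ and substitutes two new type~1 cycles $[(u^{i,j}_{k'},u^{i,j}_{k},w,w')]$ and $[(v^{i,j}_{k},v^{i,j}_{k'},w',w)]$, which land in the pairs $(K_1,K_i)$ and $(K_1,K_j)$ respectively. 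Different iterations of this replacement use different indices $k,k'$ and different vertices $w,w'\in W$, and they may originate from many different ambient pairs $(i,j)$. Consequently the type~1 basis elements that accumulate in a given pair $(K_1,K_i)$ need not share any common edge at all---neither with one another nor with the base edge $u^{1,i}_1v^{1,i}_1$ of the original $B_{1,i}$ family. Since the number of such replacement steps can be of the order of $|B|$, these cycles cannot be absorbed into the $C_{d,r,\epsilon}+\binom{\kappa}{2}$ exceptions, so your claim that ``all basis elements contributed by the pair take the form $[(u^{i,j}_\ast,v^{i,j}_\ast,v^{i,j}_k,u^{i,j}_k)]$'' fails precisely when one of the cubes is $K_1$.

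The paper fixes this with an explicit re-basing of the $i=1$ case that genuinely modifies $B$: it collects all type~1 elements $[C_1],\dots,[C_t]$ with vertices in $K_1$ and $K_j$, writes $C_k=(u_k,v_k,v_k',u_k')$, and for $k\ge 2$ replaces $[C_k]$ by the two cycles $[(u_1,v_1,v_k,u_k)]$ and $[(u_1,v_1,v_k',u_k')]$, then prunes to a linearly independent set. The identity $[C_k]=[(u_1,v_1,v_k',u_k')]-[(u_1,v_1,v_k,u_k)]$, valid because $u_1u_ku_k'$ and $v_1v_kv_k'$ are faces, shows the span is preserved; now every surviving type~1 cycle for $(K_1,K_j)$ shares the base edge $u_1v_1$. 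This second re-basing step is what your argument is missing.
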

\begin{proof}
First construct a $W$-regular basis $B'$, as guaranteed by Lemma \ref{RegGen2}, and we modify it into a strongly regular basis.  Let all quantities be as in the proof of Lemma \ref{RegGen2}.  For $2 \leq i < j \leq \kappa$, or for $1 \leq i < j \leq \kappa$ in the case that $W = \emptyset$, we may take $u^{i,j} := u^{i,j}_1$ and $v^{i,j} := v^{i,j}_1$, and all elements of $B$ with endpoints in $K_i$ and $K_j$ are of the form $(u^{i,j},v^{i,j},v',u')$ by construction.  This completes the proof in the case that $W = \emptyset$, and so now we assume that $W \neq \emptyset$ and $K = K_1$.

Now consider $1 = i < j \leq \kappa$.  Let $[C_1], \ldots, [C_t]$ be the elements of $B$ with vertices in $K_i$ and $K_j$, and define $C_k := (u_k,v_k,v_k',u_k')$ with $u_k,u_k' \in K_i$ and $v_k,v_k' \in K_j$ for $1 \leq k \leq t$.  For $2 \leq k \leq t$, add the cycles $[C_k'] := [(u_1,v_1,v_k,u_k)]$ and $[C_k''] := [u_1,v_1,v_k',u_k']$ to $B$, and remove $[C_k]$.  Observe that $C_k'$ and $C_k''$ satisfy Condition 1 in the definition of a $W$-regular basis.  Then remove any element from $B$ that can be written as a linear combination of other elements in $B'$, and repeat this operation as many times as possible.  By the existence of faces $u_1u_ku_k'$ and $v_1v_kv_k'$, $[C_k] = [C_k''] - [C_k']$, and therefore this operation preserves the property that $B'$ is a basis and hence $|B'|$ is preserved.  Since the operation also preserves $|B'-B|$, $|B|$ is preserved as well.  The lemma follows by taking $u^{i,j} := u_1$ and $v^{i,j} := v_1$.
\end{proof}

In order to obtain a more useful combinatorial picture of our $W$-strongly regular basis, we associate with the basis a set of edges with specific properties.  This set of edges will be instrumental in the proofs of later theorems.

\begin{corollary}
\label{EdgeSet}
Let all quantities be as in the statement and proof of Lemma \ref{RegGen3}.  There exists a set of edges $E=E(S) \subset \Gamma$, $|E| \geq \B_1(R(S)) - C_{r,d,\epsilon} - {\kappa \choose 2}$, which can be partitioned into sets $\{E_{i,j}\}$ for all pairs $1 \leq i < j \leq \kappa$, with the following properties. \newline
1) All the edges in $E_{i,j}$ are of the form $uv$ with $u \in S \cap K_i$ and $v \in S \cap K_j$. \newline
2) If $i \neq 1$, then there is no face $wuv$ for any $w \in S \cap K_1$ and $uv \in E_{i,j}$. \newline
3) Let $G^{i,j}$ be the bipartite graph that is the graph of $R(S \cap (K_i \cup K_j))$ with all edges in $R(S \cap K_i)$ and in $R(S \cap K_j)$ removed and then all isolated vertices removed.  Then $E_{i,j}$ does not contain two edges from the same component in $G^{i,j}$. \newline
4) Let $e_1, e_2$ be two edges in $E_{i,j}$, and let $G^{i,j}_1$ and $G^{i,j}_2$ be the components of $G^{i,j}$ that contain $e_1$ and $e_2$ respectively.  Then there is no vertex $w \in S \cap K_1$ such that $\lk(w)$ contains edges both in $G^{i,j}_1$ and $G^{i,j}_2$.
\end{corollary}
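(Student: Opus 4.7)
The plan is to read off $E$ directly from the $W$-strongly regular basis $B$ produced by Lemma \ref{RegGen3}. Each of the at least $\B_1(R(S)) - C_{d,r,\epsilon} - \binom{\kappa}{2}$ good basis elements of $B$ has the form $[C] = [(u^{i,j},v^{i,j},v',u')]$, with two cross-edges between $K_i$ and $K_j$: the distinguished edge $u^{i,j}v^{i,j}$, and one other edge $u'v'$. For each such basis element I put $u'v'$ into $E_{i,j}$. Since the $4$-cycle $C$ is determined by its two cross-edges, distinct good basis elements give distinct edges, so $|E|$ meets the required bound.

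Properties (1) and (2) drop out immediately: (1) from the construction, and (2) because any good basis element contributing to $E_{i,j}$ with $i\neq 1$ must satisfy Condition 2 in the definition of a $W$-regular basis, which forbids any face $wu'v'$ with $w\in W$. Property (3) is a short linear-independence argument. If two distinct good basis elements $[C_1],[C_2]\in B$ contributed edges $u'_1v'_1,u'_2v'_2$ lying in the same component of $G^{i,j}$, then modulo the face boundaries of the triangles $u^{i,j}u'_1u'_2$ and $v^{i,j}v'_1v'_2$ one computes $[C_1]-[C_2]=\pm[(u'_1,v'_1,v'_2,u'_2)]$ in $\HH_1(R(S))$, and Lemma \ref{BipartiteLemma} applied to $R(S\cap(K_i\cup K_j))$ makes the right-hand side zero when the two cross-edges are in a common component of $G^{i,j}$, contradicting linear independence of $B$.

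The main obstacle is Property (4). Suppose for contradiction that some $w\in W$ has cross-edges $ab,cd\in\lk(w)$ with $ab$ in the component of $G^{i,j}$ containing $e_1$ and $cd$ in the component containing $e_2$; write $a,c\in K_i$ and $b,d\in K_j$. Because $K_i$ and $K_j$ are $\epsilon$-cubes with $\epsilon\leq d^{-1/2}$, the intra-cube edges $ac$ and $bd$ lie in $R(S)$; the flag property together with the faces $wab$ and $wcd$ then produces the four triangles $wab,wbd,wcd,wca$, whose boundary sum equals the cycle $(a,b,d,c)$, so $[(a,b,d,c)]=0$ in $\HH_1(R(S))$. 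On the other hand, Lemma \ref{BipartiteLemma} applied to $R(S\cap(K_i\cup K_j))$ identifies $[(u^{i,j},v^{i,j},b,a)]$ with $[C_1]$ and $[(u^{i,j},v^{i,j},d,c)]$ with $[C_2]$, and a computation of the kind used in Property (3) gives $[C_1]-[C_2]=\pm[(a,b,d,c)]=0$, contradicting the linear independence of $B$.
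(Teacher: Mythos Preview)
Your proof is correct and follows essentially the same route as the paper: you extract the non-distinguished cross-edge $u'v'$ from each good basis element of the $W$-strongly regular basis, and then verify Properties (1)--(4) by showing that a violation would force two basis elements to coincide in $\HH_1(R(S))$. The only cosmetic difference is that for Property~(3) you invoke Lemma~\ref{BipartiteLemma} to kill $[(u'_1,v'_1,v'_2,u'_2)]$, whereas the paper does the same reduction by an explicit edge-by-edge walk along the common component of $G^{i,j}$; both arguments are equivalent.
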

\begin{proof}
Let $B'$ be a $W$-strongly regular basis for $\HH_1(R(S))$, and let $B$ be as in the proof of Lemma \ref{RegGen3}.  For fixed $i<j$, let $\{[(u_1,v_1,v_k,u_k)] : k \leq 2\}$ be the set of elements of $B$ with $u_1,u_k \in K_i$ and $v_1,v_k \in K_j$.  Set $E_{i,j} := \{u_2v_2, \ldots, u_kv_k\}$.  By construction, $E$ satisfies Conditions 1 and 2.

To verify Condition 3, note that if $u,u' \in K_i$, $v,v',v'' \in K_j$, and $uv, u'v', u'v''$ are all edges in $R(S)$, then $[(u,v,v',u')] = [(u,v,v'',u')]$ by the existence of faces $vv'v''$ and $u'v'v''$ in $R(S)$.  By repeated applications of this fact, perhaps switching the roles of $K_i$ and $K_j$, we have that if $k' > k > 1$, then $[(u_1,v_1,v_k,u_k)] = [(u_1,v_1,v_{k'},u_{k'})]$ if $u_kv_k$ and $u_{k'}v_{k'}$ are in the same component in $G^{i,j}$.  This contradicts the linear independence of $B$, and so we have that all the edges in $E_{i,j}$ are in different components of $G^{i,j}$.

Now we verify Condition 4.  Let all quantities be as in the previous paragraph.  Suppose that $\lk(w)$ contains edges $u_k'v_k'$ and $u_{k'}'v_{k'}'$ in the same components of $G^{i,j}$ as $u_kv_k$ and $u_{k'}v_{k'}$ respectively.  By the argument of the previous paragraph and existences of faces $wu_k'v_k', wu_{k'}'v_{k'}', wu_k'u_{k'}', wv_k'v_{k'}'$, we have that $[(u_k,v_k,v_{k'},u_{k'})] = [(u_k',v_k',v_{k'}',u_{k'}')] = 0$, which by the existence of faces $u_1u_ku_{k'}$ and $v_1v_kv_{k'}$ implies that $[(u_1,v_1,v_k,u_k)] = [(u_1,v_1,v_{k'},u_{k'})]$, also a contradiction to the linear independence of $B$.  This proves the corollary.
\end{proof}

\section{Results on second homology}
\label{Hom2}
In this section, we prove upper bounds on $M_{2,2}(n)$ and $M_{2,d}(n)$ and a lower bound on $M_{2,5}(n)$.  For our first major result, we consider point configurations in $\mathbb{R}^2$.  If $p \in \mathbb{R}^2$, $x(p)$ denotes the $x$-coordinate of $p$.

\begin{theorem}
\label{M22}
There exists a constant $D$ so that $M_{2,2}(n) \leq Dn$.
\end{theorem}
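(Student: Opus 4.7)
The plan is to induct on $n:=|S|$, applying Lemma~\ref{LinkLemma} to the leftmost vertex $v\in S$ (the one minimizing the $x$-coordinate):
$$\B_2(R(S))\le \B_2(R(S-v))+\B_1(\lk(v)).$$
My first instinct is to bound $\B_1(\lk(v))$ by an absolute constant, exploiting that $N(v)$ is confined to the half-disk $B_1(v)\cap\{p:x(p)\ge x(v)\}$. This instinct fails: a bipartite ``ladder'' configuration (two dense horizontal chains at $y=\pm 1/2$ just to the right of $v$, joined only by vertical rungs) forces $\B_1(\lk(v))=\Theta(n)$, so the naive inductive bound from Lemma~\ref{LinkLemma} is not tight enough.

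I therefore pass to the full Mayer--Vietoris sequence. Using that $\st_{R(S)}(v)$ is a cone, it reads
$$\HH_2(R(S-v))\to\HH_2(R(S))\to\HH_1(\lk(v))\xrightarrow{\phi}\HH_1(R(S-v)),$$
giving the sharper bound $\B_2(R(S))\le \B_2(R(S-v))+\dim\ker\phi$. In the ladder example above, $\phi$ is actually injective and $\dim\ker\phi=0$. The induction therefore closes as soon as I establish the following subclaim: there is an absolute constant $D_0$, depending only on the ambient dimension $d=2$, such that $\dim\ker\phi\le D_0$ whenever $v$ is leftmost in $S\subset\mathbb{R}^2$.

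To attack the subclaim, fix $\epsilon=2^{-1/2}$ and apply Corollary~\ref{EdgeSet} to $\lk(v)=R(N(v))$, viewed as a planar Rips complex in a half-disk of radius $1$. The number $\kappa$ of $\epsilon$-cubes meeting $N(v)$ is bounded by an absolute constant, so a basis of $\HH_1(\lk(v))$ can be written as $O(1)$ irregular cycles together with $\epsilon$-simple $4$-cycles distributed among the ${\kappa\choose 2}=O(1)$ pairs of $\epsilon$-cubes. A basis element $[(u,u',v',v)]$ lies in $\ker\phi$ precisely when it bounds a $2$-chain in $R(S-v)$, and any such $2$-chain must recruit vertices of $S$ lying strictly to the right of and outside the unit ball about $v$ (since $v$ is leftmost).

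The main obstacle is a planar geometric lemma: for $d=2$, among the $\epsilon$-simple basis cycles attached to a fixed pair of $\epsilon$-cubes, at most a constant number can vanish independently in $R(S-v)$. I expect this to follow from a planar sweeping argument in the spirit of Lemma~\ref{RegGen2} and Corollary~\ref{EdgeSet}: the filling $2$-chains live on one planar side of the vanishing cycle, and monotonicity of the $2$-dimensional distance function limits their combinatorial types to $O(1)$. Summed over the $O(1)$ pairs of $\epsilon$-cubes this yields $\dim\ker\phi=O(1)$, closing the induction with a linear bound $M_{2,2}(n)\le Dn$. The essential use of $d=2$ (through planar monotonicity) is consistent with the weaker bounds for $M_{2,d}(n)$ described later in the section outline.
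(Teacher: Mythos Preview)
Your key subclaim---that $\dim\ker\phi\le D_0$ for an absolute constant $D_0$ whenever $v$ is leftmost---is false. Take your own ladder and add one point: let $v=(0,0)$, $u_i=(0.4+i\epsilon,\tfrac12)$ and $v_i=(0.4+i\epsilon,-\tfrac12)$ for $1\le i\le k$ with $\epsilon$ tiny, and $w=(1.2,0)$. Then $v$ is leftmost, $N(v)=\{u_i,v_i:1\le i\le k\}$, and $\lk(v)$ has $\B_1=k-1$ with basis $\{[(u_1,u_i,v_i,v_1)]:2\le i\le k\}$ exactly as in your ladder. But $w\notin N(v)$ while $w$ is adjacent to every $u_i$ and every $v_i$, so $R(S-v)$ is a cone with apex $w$ and every basis cycle bounds there. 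Hence $\dim\ker\phi=k-1=\Theta(n)$. Note that all $k-1$ basis cycles live in a single pair of $\epsilon$-cubes and all die via the same filling vertex $w$, so your ``at most a constant number of $\epsilon$-simple cycles per cube-pair can vanish'' heuristic and your ``$O(1)$ combinatorial types of filling'' heuristic both fail simultaneously.

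The paper's proof avoids this by never trying to control a single prescribed vertex. It takes $W$ to be the densest $\epsilon$-cube, applies Corollary~\ref{EdgeSet} to $\lk(w)$ for \emph{every} $w\in W$, and proves an averaging bound $\sum_{w\in W}|E_{i,j,w}|\le D'|W|$ for each pair $(K_i,K_j)$; pigeonhole then yields some $w\in W$ with $\B_1(\lk(w))$ bounded absolutely, and Lemma~\ref{LinkLemma} plus induction finishes. The averaging is where planarity enters, via the crossing lemma (Lemma~\ref{CrossingLemma}) and a near-perpendicularity lemma (Lemma~\ref{PerpLemma}), together with Conditions~2 and~4 of Corollary~\ref{EdgeSet}, which couple the edge sets $E_{i,j,w}$ across different $w\in W$. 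Your leftmost-vertex scheme throws away exactly this coupling between vertices of $W$, and the counterexample shows that no purely local bound at a single vertex---on $\B_1(\lk(v))$ or on $\dim\ker\phi$---can close the induction.
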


We need two lemmas before we prove Theorem \ref{M22}.  Both the statement and the proof of our first lemma are found as \cite[Proposition 2.1]{Planar}.  The second lemma is a claim about arrangements of points that are close together.
\begin{lemma}
\label{CrossingLemma}
Let $S = \{u_1,u_2,v_1,v_2\} \subset \mathbb{R}^2$ so that $R(S)$ contains edges $u_1v_1$ and $u_2v_2$, and suppose that the line segments joining $u_1,v_1$ and $u_2,v_2$ intersect in $\mathbb{R}^2$.  Then $R(S)$ is a cone.
\end{lemma}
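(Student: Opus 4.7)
The plan is to identify, explicitly, a vertex that serves as the apex of the cone, and then use flagness of $R(S)$ to upgrade ``adjacent to all other vertices'' into ``is a cone vertex.'' Let $p \in \mathbb{R}^2$ be the point at which the segments $\overline{u_1v_1}$ and $\overline{u_2v_2}$ intersect. I would select $w \in S$ to be a vertex minimizing $\ds(w,p)$; by symmetry I may relabel so that $w = v_2$. The first thing to verify, using that $p$ lies on each of the two crossing segments, is that $\ds(u_i,p) + \ds(v_i,p) = \ds(u_i,v_i) \le 1$ for $i = 1,2$, and in particular $\ds(w,p) \le 1/2$.

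Next I would check that $w$ is within distance $1$ of each of the other three points. The edge $wu_2 = v_2u_2$ is present in $R(S)$ by hypothesis. For the remaining two vertices $u_1, v_1$, the key input is that $w$ was chosen to be closest to $p$, so in particular $\ds(w,p) \le \ds(v_1,p)$ and $\ds(w,p) \le \ds(u_1,p)$. Using collinearity of $u_1, p, v_1$, the triangle inequality gives
\[
\ds(w,u_1) \le \ds(w,p) + \ds(p,u_1) \le \ds(v_1,p) + \ds(p,u_1) = \ds(u_1,v_1) \le 1,
\]
and symmetrically $\ds(w,v_1) \le 1$. Hence $w$ is joined by an edge to each of $u_1, u_2, v_1$ in $R(S)$.

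Finally, I would invoke the flag property of the Rips complex: if $F \in R(S)$, then $F \cup \{w\}$ has all pairs of vertices joined by edges (pairs within $F$ by $F \in R(S)$, pairs with $w$ by the previous paragraph), so $F \cup \{w\} \in R(S)$. This is exactly the statement that $R(S)$ is a cone with apex $w$.

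The proof is essentially elementary planar geometry, and the only ``thought'' step is choosing the right apex: the correct choice is the point of $S$ nearest to the crossing $p$, not, for instance, an arbitrary endpoint of the shorter segment. Once that choice is made, the bounds fall out of the triangle inequality combined with the identity $\ds(u_i,p) + \ds(v_i,p) = \ds(u_i,v_i)$. I do not anticipate any genuine obstacle; in fact, as the paper notes, the statement appears as \cite[Proposition 2.1]{Planar}, so one could alternatively cite it directly.
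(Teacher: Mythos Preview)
Your proof is correct and follows essentially the same approach as the paper: both let $p$ be the intersection point, choose the vertex of $S$ nearest $p$ as the apex, and then use the triangle inequality together with the identity $\ds(u_i,p)+\ds(v_i,p)=\ds(u_i,v_i)\le 1$ to see that this vertex is adjacent to the remaining three. The only cosmetic differences are the choice of label for the apex ($v_2$ versus the paper's $u_1$) and that you spell out the final flag-complex step explicitly.
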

\begin{proof}
Let $p$ be the point of intersection between $u_1v_1$ and $u_2v_2$.  Suppose without loss of generality that the segment $pu_1$ is not longer than any of $pu_2, pv_1$, or $pv_2$.  Since $||pu_2||+||pv_2|| \leq 1$, then $||pu_1||+||pu_2|| \leq 1$ and $||pu_1||+||pv_2|| \leq 1$.  It follows from the triangle inequality that $u_1u_2$ and $u_1v_2$ are edges in $R(S)$ and hence $R(S)$ is a cone.
\end{proof}

\begin{lemma}
\label{PerpLemma}
Let $U$ and $V$ be finite sets of points in $\mathbb{R}^2$ such that all points of $U$ and $V$ are within distance $\epsilon$ of points $p_U$ and $p_V$ with $\ds(p_U,p_V) = 1$.  Choose $v_1 \neq v_2 \in V$.  Consider the vectors $w_1 := p_V-p_U$ and $w_2 := \frac{v_2-v_1}{\ds(v_1,v_2)}$, with $w_1 \cdot w_2$ denoting the standard scalar product.  Then one of the following is true. \newline
1) Either $\ds(v_1,u) \leq \ds(v_2,u)$ for all $u \in U$ or $\ds(v_1,u) \geq \ds(v_2,u)$ for all $u \in U$. \newline
2) There exists $\alpha = \alpha(\epsilon) \rightarrow 0$ as $\epsilon \rightarrow 0$ such that $|w_1 \cdot w_2| < \alpha$.
\end{lemma}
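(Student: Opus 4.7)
The plan is to translate the failure of condition (1) into a geometric statement about the perpendicular bisector $L$ of the segment $v_1v_2$, and then exploit the fact that $U$ has diameter at most $2\epsilon$. A point $u$ satisfies $\ds(v_1,u)\leq\ds(v_2,u)$ iff $u$ lies on the $v_1$-side of $L$, so if (1) fails there exist $u_1,u_2\in U$ lying strictly on opposite sides of $L$. Since $\|u_i-p_U\|\leq\epsilon$ for both $i$, the separator $L$ must meet the closed $\epsilon$-ball around $p_U$, and in particular $p_U$ lies within distance $\epsilon$ of $L$.

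Next I would convert this closeness into an estimate on $|w_1\cdot w_2|$. Set $m:=(v_1+v_2)/2\in L$; since $w_2$ is a unit normal to $L$, the signed distance from any point $x$ to $L$ is $(x-m)\cdot w_2$. The previous paragraph then gives $|(p_U-m)\cdot w_2|\leq\epsilon$. Also, $m$ is the midpoint of two points each lying within $\epsilon$ of $p_V$, so $\|m-p_V\|\leq\epsilon$, and hence $|(p_V-m)\cdot w_2|\leq\epsilon$. Expanding
\[
(p_U-m)\cdot w_2 \;=\; -\,w_1\cdot w_2 + (p_V-m)\cdot w_2
\]
and applying the triangle inequality yields $|w_1\cdot w_2|\leq 2\epsilon$, so the choice $\alpha(\epsilon):=3\epsilon$ satisfies the required strict inequality, and evidently $\alpha(\epsilon)\to 0$ as $\epsilon\to 0$.

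I do not anticipate any real obstacle; the geometric content is just that $L\perp w_2$ and $L$ passes within $\epsilon$ of $p_V$, so forcing $L$ also to pass near $p_U$ pins $(p_U-p_V)\cdot w_2 = -w_1\cdot w_2$ to be small. The main care needed is with sign conventions and verifying that the resulting bound on $\alpha$ depends only on $\epsilon$, which is automatic since each of the two $\epsilon$-slacks was bounded purely in terms of the hypothesis on the diameters of $U$ and $V$.
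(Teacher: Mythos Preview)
Your argument is correct and in fact cleaner than the paper's. The paper fixes coordinates so that $p_U=(0,1)$, $p_V=(0,0)$, translates to put $v_1$ at the origin (at the cost of replacing $\epsilon$ by $2\epsilon$), writes $v_2=(x,y)$ and the two witnessing points of $U$ as $(x',1+y')$, $(x'',1+y'')$, and then manipulates the squared-distance inequalities directly to show that the slope $m=y/x$ is forced to be small; this yields $|w_1\cdot w_2|$ small, with $\alpha(\epsilon)$ left implicit. Your approach replaces this computation with the geometric observation that the perpendicular bisector $L$ of $v_1v_2$ has unit normal $w_2$ and passes through the midpoint $m$, so that closeness of $p_U$ and $p_V$ to $L$ translates immediately into a bound on $w_1\cdot w_2$. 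The payoff is twofold: you avoid any case analysis (the paper has to argue separately that $x\neq 0$), and you obtain the explicit bound $|w_1\cdot w_2|\leq 2\epsilon$, whereas the paper only asserts the existence of some $\alpha(\epsilon)\to 0$. One minor remark: in your step showing $L$ meets $\overline{B}(p_U,\epsilon)$, you are implicitly using convexity of the ball (the segment $u_1u_2$ lies in it and crosses $L$); this is obvious but worth a word since it is the only place the geometry of $\mathbb{R}^2$ enters beyond the inner product.
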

Roughly speaking, the second condition asserts that $w_1$ and $w_2$ are almost perpendicular.

\begin{proof}
By applying an isometry, we may assume without loss of generality that $p_U = (0,1)$ and $p_V = (0,0)$.  By applying a translation and replacing $\epsilon$ with $2 \epsilon$, we may also assume that $v_1 = (0,0)$.  Let $v_2 = (x,y)$.  Suppose that the first statement is false; that is, there exist $(x',1+y'), (x'',1+y'') \in U$ such that $\ds(v_1,(x',1+y')) > \ds(v_2,(x',1+y'))$ and $\ds(v_1,(x'',1+y'')) < \ds(v_2,(x'',1+y''))$.  Note that $|x|,|y|,|x'|,|y'|,|x''|,|y''| \leq \epsilon$.  We show that the second condition holds.

By considering squares of distances and simplifying, we have that $0 > x^2-2xx'-2y+y^2-2yy'$ and $0 < x^2-2xx''-2y+y^2-2yy''$.  This is impossible if $x=0$, which we see by dividing each side by $y$ and considering the fact that $y,y',y''$ are all close to $0$.  Then let $y=mx$.  Then we have that the quantities $x-2x'-2m+m^2x-2my' = x-2x'+m(-2+y-2y')$ and $x-2x''+m(-2+y-2y'')$ have opposite signs, which implies that $|m| < \alpha$ for some $\alpha \rightarrow 0$ as $\epsilon \rightarrow 0$.  Then $w_1$ is a vertical vector, $w_2$ is a nearly horizontal vector, and the result follows.
\end{proof}

\proofof{Theorem \ref{M22}}
Let $S$ be a point configuration in $\mathbb{R}^2$ with $|S| \leq n$.  Consider $0 < \epsilon < 2^{-1/2}$, and let $K$ be an $\epsilon$-cube such that $|K \cap S|$ is maximal.  Set $W := K \cap S$.  Since $\epsilon < 2^{-1/2}$, if $v \in V(\lk(w))$ for some $w \in W$, then $v$ is of distance no more than $3/2$ from the center of $K$.  There exists a value $\kappa$, which depends only on $\epsilon$, and $\epsilon$-cubes $\mathcal{K} = \{K=K_1, \ldots, K_\kappa\}$ such that for every $w \in W$, $\lk(w)$ contains only vertices in $S \cap (\cup K_i)$.  For each $w \in W$, let $$E_w = E(\lk(w)) = \cup_{1 \leq i < j \leq \kappa} E_{i,j,w}$$ be a set of edges as guaranteed by Corollary \ref{EdgeSet} with corresponding graphs $G^{i,j}_w$.  We take $r = 3/2$ in the corollary.

We claim that there exists an absolute constant $D'$ such that, for all $1 \leq i < j \leq \kappa$, $\sum_{w \in W} |E_{i,j,w}| \leq D'|W|$.  Assuming this claim, it then follows that $$\sum_{w \in W}|E_w| \leq {\kappa \choose 2}D'|W|,$$ and that there exists some $w \in W$ such that $|E_w| \leq {\kappa \choose 2}D'$.  By construction of $E_w$, there exists a constant $D$ such that $\B_1(\lk(w)) \leq D$.  The theorem follows by Lemma \ref{LinkLemma} and induction on $|S|$.  We prove the claim in two cases: the $i=1$ case and the $i>1$ case.

\noindent
\textbf{Case 1: $\mathbf{i=1}$:}

First suppose that $i=1$.  Let $U := S \cap K_j$.  By choosing $\epsilon$ sufficiently small and translating the coordinate system, we may assume that all points of $W$ are within distance $0.01$ of $(0,0)$.  If $\ds(u,w) > 1$ for all $w \in W, u \in U$, then $|E_{1,j,w}|=0$ for all $w$.  If $\ds(u,w) \leq 1$ for all $w \in W, u \in U$, then $|E_{1,j,w}| \leq 1$ for all $w$ by Condition 3 of Corollary \ref{EdgeSet} and the observation that $G^{1,j}_w$ is a complete bipartite graph.  Hence $\ds(u,w) > 1$ for some $u \in U, w \in W$ and $\ds(u',w') \leq 1$ for some $u' \in U, w' \in W$.  By rotating the coordinate system about the origin, we may assume that all points of $U$ are within distance $0.1$ of $(0,1)$.

Let $U_w$ be the set of endpoints of edges in $E_{1,j,w}$ that are in $U$.  If $w, w' \in W$ and $\ds(u,w') \leq \ds(u,w)$ for all $u \in U$, then there is an edge joining $w'$ to all $u \in U_w$ in $G^{i,j}_w$, which implies that $G^{i,j}_w$ is connected, and by Condition 3 of Corollary \ref{EdgeSet}, $|U_{w}| \leq 1$.  Construct $\tilde{W}$, starting from $W$, in the following way: whenever there is a pair $w \neq w' \in W$ such that $\ds(u,w') \leq \ds(u,w)$ for all $u \in U$, delete $w$, and continue until no more points can be deleted in this manner.  If $\epsilon$ is sufficiently small, then for all $w,w' \in \tilde{W}$, the slope $m$ of the line joining $w$ and $w'$ satisfies $-1 < \alpha < 1$; otherwise either $w$ or $w'$ would have been deleted by Lemma \ref{PerpLemma}.  It suffices to show that $\sum_{w \in \tilde{W}}|U_w| \leq D'|W|$ for some constant $D'$ by $$\sum_{w \in W}|E_{1,j,w}| = \sum_{w \in W}|U_w| \leq \sum_{w \in \tilde{W}}|U_w|+|W|.$$

Choose $u, u' \in U$.  If $\ds(u,w) \leq \ds(u',w)$ for all $w \in W$, then whenever $w'u' \in E_{1,j,w}$ for some $w$, $w'u$ is an edge in $G^{i,j}_w$ in the same component as $w'u'$.  Hence we may replace $w'u'$ with $w'u$ and still satisfy the conditions of Corollary \ref{EdgeSet}.  Construct $\tilde{U}$, starting from $U$, by deleting $u'$ for every pair of vertices $u \neq u' \in \tilde{U}$ such that $\ds(u,w) \leq \ds(u',w)$ for all $w \in W$, until no more vertices can be deleted in this manner.  We may choose $E_{1,j,w}$ so that every endpoint of an edge in $E_{1,j,w}$ in $U$ is actually in $\tilde{U}$.  Label the vertices of $\tilde{W}$ as $\{w_1, \ldots, w_{|\tilde{W}|}\}$ in order of ascending $x$-coordinates, and likewise label the vertices of $\tilde{U}$ as $\{u_1,\ldots,u_{|\tilde{U}|}\}$ in order of ascending $x$-coordinates.  As above, we may choose $\epsilon$ so that for all $u \neq u' \in \tilde{U}$, the slope $m$ of the line that joins $u$ and $u'$ satisfies $-1 < m < 1$.

Choose $i_1 < i_2$ and suppose that there exist $j_1 < j_2 < j_3 < j_4 < j_5 < j_6$ such that $u_{j_1}, u_{j_2},u_{j_3} \in U_{w_{i_2}}$ and $u_{j_4}, u_{j_5}, u_{j_6} \in U_{w_{i_1}}$.  Suppose that there exist $u_{j_1}w_a, u_{j_2}w_b, u_{j_3}w_c \in E_{i,j,w_{i_1}}$, and we derive a contradiction.  At most one of $w_a,w_b,w_c$ is equal to $w_{i_1}$.  Then there exists $k \in \{j_1,j_2,j_3\}$ such that $w_{i_1}u_k$ is not an edge; otherwise, $\lk(w_{i_1})$ contains two edges of $E_{i,j,w_{i_2}}$, a contradiction to Condition 4 of Corollary \ref{EdgeSet}.  Likewise, there exists $k' \in \{j_4,j_5,j_6\}$ such that $w_{i_2}u_{k'}$ is not an edge.  In particular, this shows that $|U_{w_{i_1}} \cap U_{w_{i_2}}| \leq 2$ for all $i_1 < i_2$.  The points $w_{i_1}$ and $u_{k'}$ are on opposite sides of the line joining $w_{i_2}$ and $u_k$ by consideration of the slopes of the lines joining the points, and similarly $w_{i_2}$ and $u_k$ are on opposite sides of the line joining $w_{i_1}$ and $u_{k'}$.  Hence the segments $w_{i_1}u_{k'}$ and $w_{i_2}u_k$ intersect in $\mathbb{R}^2$, and the set $\{w_{i_1},w_{i_2},u_k,u_{k'}\}$ violates Lemma \ref{CrossingLemma}.  Thus there cannot exist such $j_1 < \ldots < j_6$.

Let $W' = \{w \in \tilde{W}: |U_w| > 5\}$.  It suffices to show that $\sum_{w \in W'}|U_w| \leq D'|W|$ for some constant $D'$ by $$\sum_{w \in \tilde{W}}|U_w| \leq \sum_{w \in W'}|U_w|+5|W|.$$ For $w \in W'$, let $r(w)$ and $r'(w)$ be the indices of the points of $U_w$ with third smallest and second largest $x$-coordinates respectively.  By the above, if $w, w' \in W$ and $x(w') > x(w)$, then $$r(w') \geq r'(w) \geq r(w)+|U_w|-5.$$ If $w^-$ and $w^+$ are the points in $W'$ with smallest and largest $x$-coordinates respectively, then $$r(w^-)+\sum_{w^+ \neq w \in W'}(|U_w|-5) \leq r(w^+) \leq |\tilde{U}|-|U_{w^+}|+3.$$ Then $$\sum_{w \in W'}|U(w)| \leq |\tilde{U}|+5(|W'|-1)+3 \leq 5|W|.$$ This proves the result in the case that $i=1$.

\noindent
\textbf{Case 2: $\mathbf{i>1}$:}

Now fix $i$ and $j$ with $j > i > 1$.  Set $U := S \cap K_i$ and $V := S \cap K_j$, and for all $w \in W$, define $U_w := \{u_{w,1}, \ldots, u_{w,r_w}\}$ and $V_w := \{v_{w,1}, \ldots, v_{w,r_w}\}$ so that $E_{i,j,w}=\{u_{w,1}v_{w,1},\ldots,u_{w,r_w}v_{w,r_w}\}$.

If $\ds(u,v) > 1$ for all $u \in U, v \in V$, or if $\ds(w,u) > 1$ for all $w \in W, u \in U$, or if $\ds(w,v) > 1$ for all $w \in W, v \in V$, then $|E_w| = 0$ for all $w \in W$ and the result is proven.  If $\ds(u,v) \leq 1$ for all $u \in U, v \in V$, then $G^{i,j}_w$ is a complete bipartite graph and hence $|E_{i,j,w}| \leq 1$ for all $w \in W$ by Condition 3 of Corollary \ref{EdgeSet} and the claim is proven.  If $\ds(w,u) \leq 1$ for all $w \in W, u \in U$, then consider $v \in V_w \cap V_{w'}$ for $w \neq w'$ so that $uv \in E_{i,j,w}$.  Then $u$ and $v$ are both vertices in $\lk(w')$, which contradicts Condition 2 of Corollary \ref{EdgeSet} for $E_{i,j,w}$.  Hence $V_w \cap V_{w'} = \emptyset$, which implies that $\sum_{w \in W}|E_{i,j,w}| \leq |V| \leq |W|$, proving the claim.  Likewise, if $\ds(w,v) \leq 1$ for all $w \in W, v \in V$, then the claim is proven.  All pairs of points in $U \cup V \cup W$ that are not both in the same $\epsilon$-cube have distance between $1-4\epsilon$ and $1+4\epsilon$.  By choosing $\epsilon$ sufficiently small and making a suitable isometric change of coordinates, we may assume that all vertices of $U,V,W$ are within distance $0.01$ of $(0,0)$, $(0,1)$, and $(\sqrt{3}/2, 1/2)$ respectively.

For $u \in U$ and $v \in V$, let $W_u = \{w \in W: u \in U_w\}$ and $W_v = \{w \in W: v \in V_w\}$.  For $w \in W_u$, define the vertex $v(u,w)$ so that the edge $\{u,v(u,w)\} \in E_{i,j,w}$.  If $w, w' \in W_u$, then either the line that joins $w$ and $w'$ has slope $m$ satisfying $\sqrt{3}-0.1 < m < \sqrt{3}+0.1$; or either $\ds(w,v) \leq \ds(w',v)$ for all $v \in V$, or $\ds(w',v) \leq \ds(w,v)$ for all $v \in V$ by Lemma \ref{PerpLemma}.  Without loss of generality, assume the former.  Then $\{u,v(u,w')\}$ is an edge in $\lk(w)$ and in $\lk(w')$, a contradiction to Condition 2 of Corollary \ref{EdgeSet}.  The vertices in $W_u$ can then be arranged $w_{u,1}, \ldots, w_{u,s_u}$ in order of increasing distance from $U$.  By the same argument, the vertices of $W_v$ can be similarly arranged $w_{v,1}, \ldots, w_{v,t_v}$ in order of increasing distance from $V$.

For all $u \in U$ with $W_u \neq \emptyset$, there exists a vertex $v(u) \in V$ and $w \in W_u$ such that $\{u,v(u)\} \in E_{i,j,w}$ and $\ds(w,u) \geq \ds(w',u)$ for all $w' \in W_u$.  Likewise, for all $v \in V$ with $W_v \neq \emptyset$, there exists a vertex $u(v) \in U$ such that $\{u(v),v\} \in E_{i,j,w}$ and $\ds(w,v) \geq \ds(w',v)$ for all $w' \in W_v$.  There are at most $|U|$ (or $|V|$) edges $uv$ in $\cup_{w \in W}E_{i,j,w}$ such that $v=v(u)$ (or $u=u(v)$).  Also, the $E_{i,j,w}$ are disjoint by Condition 2 of Corollary \ref{EdgeSet}.  Hence if $\sum_{w \in W}|E_{i,j,w}| > 2|W| \geq |U|+|V|$, there exist $w \in W, u \in U, v \in V$ such that $uv \in E_{i,j,w}$, $u \neq u(v)$, and $v \neq v(u)$.  In this case, choose $w_u \in W_u, w_v \in W_v$ such that $\ds(u,w_u) > \ds(u,w)$ and $\ds(v,w_v) > \ds(v,w)$.  By consideration of the slopes between the points $u,v,w,w_u,w_v$, the points $v$ and $w_v$ are on opposite sides of the line joining $u$ and $w_u$, and $u$ and $w_u$ are on opposite sides of the line joining $v$ and $w_v$, and so the segments $uw_u$ and $vw_v$ intersect.  By Lemma \ref{CrossingLemma}, either $uw_v$ or $vw_u$ is an edge, yielding either the face $uvw_v$ or $uvw_u$.  This contradicts Condition 2 of Lemma \ref{EdgeSet}.  We conclude that $\sum_{w \in W}|E_{i,j,w}| \leq 2|W|$ as desired.
\endproof

\begin{theorem}
\label{M2D}
For all fixed $\delta > 0$ and $n$ sufficiently large, $M_{2,d}(n) < \delta n^2$.
\end{theorem}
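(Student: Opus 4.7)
The plan is to proceed by induction on $n$ via Lemma~\ref{LinkLemma}. Iterating the inequality $\B_2(R(S)) \leq \B_2(R(S-v)) + \B_1(\lk(v))$ over a sequence of vertex removals $v_1,\dots,v_n$ yields
\[
\B_2(R(S)) \leq \sum_{i=1}^n \B_1\bigl(\lk_{R(S_i)}(v_i)\bigr), \quad S_i := S\setminus\{v_1,\dots,v_{i-1}\}.
\]
To obtain $M_{2,d}(n) < \delta n^2$, it therefore suffices to show that at every stage one can pick a vertex $v$ in the current point set with $\B_1(\lk(v)) = o(|S_i|)$, so that the telescoping sum is $o(n^2)$.

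The bound $\B_1(\lk(v)) \leq C_d|N(v)|$ coming from the main theorem of Section~\ref{Hom1} is only linear in $|N(v)|$ and hence too weak. I would instead fix $\epsilon = \epsilon(d,\delta) < d^{-1/2}$ sufficiently small and apply Corollary~\ref{EdgeSet} to $\lk(v) = R(N(v))$, which is a Rips complex on points contained in the radius-$1$ ball centered at $v$. With $\kappa = \kappa(d,\epsilon)$ the number of $\epsilon$-cubes covering such a ball, the corollary produces an edge set $E(\lk(v)) = \bigsqcup_{i<j} E_{i,j}^{(v)}$ with $\B_1(\lk(v)) \leq |E(\lk(v))| + C'(d,\epsilon)$, where each $E_{i,j}^{(v)}$ selects at most one edge from each connected component of the bipartite graph $G^{i,j}_v$ between $K_i \cap N(v)$ and $K_j \cap N(v)$.

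The heart of the matter is then to produce, at each stage, a vertex $v$ with $|E(\lk(v))| = o(n)$. Following the strategy of the proof of Theorem~\ref{M22}, I would pass to a most populated global $\epsilon$-cube $K$, set $W := K\cap S$, and estimate $\sum_{w\in W} |E_{i,j}^{(w)}|$ for each of the boundedly many cube pairs $(K_i, K_j)$ lying within the $1$-neighborhood of $K$. The planar proof controls this sum by $O(|W|)$ via the crossing lemma and near-perpendicularity lemma, producing a constant bound on $\min_{w\in W}\B_1(\lk(w))$ and hence the sharp linear bound of Theorem~\ref{M22}. In dimension $d\geq 3$ these planar inputs are unavailable, and I would aim instead for the weaker estimate $\sum_{w\in W} |E_{i,j}^{(w)}| \leq |W|\cdot h_d(\epsilon, n)$ with $h_d(\epsilon,n)=o(n)$, obtained from a purely metric packing argument: condition (4) of Corollary~\ref{EdgeSet} forbids any $w\in W$ from having link edges in two distinct components of $G^{i,j}_w$, so that edges appearing in $\bigcup_{w\in W}E_{i,j}^{(w)}$ are forced by the triangle inequality to be separated along some direction by a distance bounded away from $0$, and only subquadratically many such configurations can be packed into the unit ball around $W$. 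Averaging over $w\in W$ then yields a vertex with $\B_1(\lk(w))=o(n)$ and closes the induction.

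The main obstacle is exactly this higher-dimensional packing estimate. Lemmas~\ref{CrossingLemma} and \ref{PerpLemma} are intrinsically two-dimensional and carry the heaviest weight in Theorem~\ref{M22}; without their direct analogues, one must substitute a purely triangle-inequality-based argument combined with the cube-cover combinatorics of Corollary~\ref{EdgeSet}. The fact that the theorem asserts only $M_{2,d}(n) < \delta n^2$, rather than the linear bound of the planar case, reflects precisely this weaker geometric input: a general Euclidean substitute for the crossing and perpendicularity arguments can reasonably yield only sub-quadratic control on the relevant edge counts, which is nonetheless enough to close the induction and give the theorem.
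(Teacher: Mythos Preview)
Your overall architecture is exactly the paper's: iterate Lemma~\ref{LinkLemma}, pick the most populated $\epsilon$-cube $K$, set $W=K\cap S$, apply Corollary~\ref{EdgeSet} to each $\lk(w)$, and show $\sum_{w\in W}|E_{i,j}^{(w)}|=o(|W|n)$ for each of the boundedly many cube pairs, so that averaging produces a vertex with $\B_1(\lk(w))=o(n)$. That part is right.

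The gap is at the step you flag as ``the main obstacle.'' Your proposed substitute for Lemmas~\ref{CrossingLemma} and~\ref{PerpLemma} is a metric packing argument: you claim the edges in $\bigcup_{w}E_{i,j}^{(w)}$ ``are forced by the triangle inequality to be separated along some direction by a distance bounded away from $0$.'' No such separation holds in general, and no packing argument of this kind is known to give the needed $o(n^2)$ bound. What actually closes the argument is extremal graph theory, not geometry. In the paper's proof the cases split as follows. For $i>1$, Conditions~2 and~3 of Corollary~\ref{EdgeSet} imply that each $E_{i,j,w}$ is an \emph{induced matching} in the fixed bipartite graph between $S\cap K_i$ and $S\cap K_j$, and that these matchings are pairwise disjoint as $w$ ranges over $W$. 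The bound $\sum_{w}|E_{i,j,w}|=o(n^2)$ is then exactly the Ruzsa--Szemer\'edi induced-matching theorem (Lemma~\ref{InducedMatchings}, via the triangle removal lemma / Szemer\'edi regularity). For $i=1$, the argument from the planar proof that $|U_w\cap U_{w'}|\leq 2$ for $w\neq w'$ survives verbatim (it uses only Conditions~3 and~4 of Corollary~\ref{EdgeSet}, not planarity); this says no two vertices of $W$ share three common neighbours in the auxiliary bipartite graph, and a K\H{o}v\'ari--S\'os--Tur\'an count (Lemma~\ref{MooreBound}) gives $\sum_{w}|E_{1,j,w}|\leq C|W|^{3/2}$.

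So the missing idea is that the higher-dimensional replacement for the crossing and perpendicularity lemmas is not geometric at all: it is the Ruzsa--Szemer\'edi theorem. This also explains why the bound is only $o(n^2)$ rather than linear---the best known quantitative form of that theorem gives only $n^2/(\log_* n)^{c}$, and the lower-bound construction in Section~\ref{QuasiRips} shows this is close to sharp for quasi-Rips complexes.
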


Before we give the proof, we need two additional lemmas.  The first concerns bipartite graphs that avoid certain kinds of subgraphs.

\begin{lemma}
\label{MooreBound}
Let $G$ be a bipartite graph on vertices $U \sqcup V$ with $|U| \leq n$ and $|V| \leq n$.  Suppose that no two vertices of $U$ share three common neighbors.  Then there exists a constant $C$ such that $G$ has at most $Cn^{3/2}$ edges.
\end{lemma}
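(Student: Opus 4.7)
The plan is to run the classical Kővári--Sós--Turán double counting argument. The hypothesis that no two vertices of $U$ share three common neighbors says exactly that $G$ contains no copy of $K_{2,3}$ with the side of size two lying in $U$, which is the standard forbidden-subgraph condition yielding an $O(n^{3/2})$ bound on the number of edges.

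More concretely, I would count ``cherries'' centered in $V$, that is, ordered or unordered pairs $(v,\{u_1,u_2\})$ with $v \in V$, $u_1,u_2 \in U$, and both $u_1v,u_2v$ edges of $G$. Counting through $v$, each vertex $v$ of degree $d_v$ contributes $\binom{d_v}{2}$ such pairs, so the total is $\sum_{v \in V}\binom{d_v}{2}$. Counting through the pair $\{u_1,u_2\}$, the number of contributions is the number of common neighbors of $u_1$ and $u_2$, which by hypothesis is at most $2$. Hence
\[
\sum_{v \in V}\binom{d_v}{2}\;\le\; 2\binom{|U|}{2}\;\le\; n^{2}.
\]

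Now I would lower-bound the left-hand side by the Cauchy--Schwarz (equivalently, Jensen) inequality: writing $e = |E(G)| = \sum_{v \in V} d_v$ and using $|V| \le n$,
\[
\sum_{v \in V}\binom{d_v}{2}\;=\;\tfrac{1}{2}\Bigl(\sum_{v}d_v^{2}-\sum_{v}d_v\Bigr)\;\ge\;\tfrac{1}{2}\!\left(\frac{e^{2}}{|V|}-e\right)\;\ge\;\tfrac{1}{2}\!\left(\frac{e^{2}}{n}-e\right).
\]
Combining the two estimates gives $e^{2} \le 2n^{3} + ne$, and solving this quadratic in $e$ yields $e \le Cn^{3/2}$ for a suitable absolute constant $C$ (for instance $C = 2$ works once $n$ is moderately large; a trivial adjustment of $C$ covers the small cases).

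There is no real obstacle here; the only thing to keep track of is that the asymmetric hypothesis (two vertices of $U$ share at most two common neighbors) is applied in the correct direction, namely by centering the cherries at $V$ so that the forbidden configuration is exactly the one being counted. If one mistakenly centered the cherries at $U$, the hypothesis would give no bound and the argument would fail; so the main thing is to be careful about which side of the bipartition plays which role.
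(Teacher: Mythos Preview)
Your argument is correct and is essentially the same Kővári--Sós--Turán double count that the paper gives: both count pairs of $U$-neighbors of each $v\in V$, bound the total by $2\binom{|U|}{2}$ using the hypothesis, and then apply convexity (the paper via the average degree, you via Cauchy--Schwarz on $\sum d_v^2$) to extract the $Cn^{3/2}$ edge bound.
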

\begin{proof}
Equivalent to the condition that no two vertices of $U$ share three common neighbors is the condition that no three vertices of $V$ share two common neighbors.  For each $v \in V$, let $N(v)$ be the set of neighbors of $v$.  Let ${N(v) \choose 2}$ be the set of pairs of neighbors of $v$, so that $|{N(v) \choose 2}| = {|N(v)| \choose 2}$.  Also, let $d$ be the average degree of vertices in $v$.  Then $\sum_{v \in V} |{N(v) \choose 2}| \geq |V|{d \choose 2}.$  Since no three vertices in $V$ share two common neighbors, it must be that $|V|{d \choose 2} \leq 2{|U| \choose 2} \leq 2{n \choose 2}$ by the pigeonhole principle.  There exists a constant $C$ such that $d \leq C n|V|^{-1/2}$, and hence $G$ has at most $C n|V|^{1/2}$ edges.  This proves the result by $|V| \leq n$.
\end{proof}

The second lemma concerns induced matchings.  Let $G$ be a bipartite graph with vertex sets $U$ and $V$.  Then a \textit{matching} $M$ is a set of edges in $G$ such that no two edges have a common endpoint.  We say that $M$ is an \textit{induced matching} if whenever $uv, u'v' \in M$ for $u,u' \in U, v,v' \in V$, $G$ does not contain edges $uv'$ or $u'v$.  The following is an immediate consequence of \cite[Proposition 10.45]{TaoVu}.

\begin{lemma}
\label{InducedMatchings}
Let $G$ be a bipartite graph with vertex sets $U$ and $V$, $|U| \leq n$ and $|V| \leq n$.  Let $M_1, \ldots, M_t, t \leq n$ be disjoint sets of edges that are each an induced matching in $G$.  Let $\delta > 0$ be fixed.  Then $\sum_{i=1}^t|M_i| < \delta n^2$ if $n$ is sufficiently large.
\end{lemma}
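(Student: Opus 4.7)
The plan is to observe that this is essentially the Ruzsa--Szemer\'edi theorem on the sparsity of graphs whose edge set decomposes into induced matchings, which is precisely the content of \cite[Proposition 10.45]{TaoVu}. Since the author advertises the lemma as an immediate consequence of that proposition, the role of a proof here is mainly to set up the reduction. Let $H$ be the subgraph of $G$ consisting of the disjoint union of edge sets $M_1 \cup \cdots \cup M_t$. Then $H$ is a graph on at most $2n$ vertices whose edge set is partitioned into the sets $M_1, \ldots, M_t$, each of which is an induced matching in $G$. Because $H$ is a subgraph of $G$, the condition of being induced is inherited: no $M_i$ acquires a chord in $H$ that it lacked in $G$. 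So $H$ is a graph whose edge set is partitioned into induced matchings (of $H$ itself), and we may feed this directly into the cited proposition to obtain a total edge count of $o(n^2)$. For $n$ sufficiently large, $o(n^2) < \delta n^2$, which yields the lemma.

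The hypothesis $t \leq n$ is essentially free in this context: each $M_i$ is a matching on at most $2n$ vertices, and the Ruzsa--Szemer\'edi bound is phrased for partitions of the edge set with no restriction on the number of parts. The only mild point to verify is that the cited proposition, which in \cite{TaoVu} is stated for general (not necessarily bipartite) graphs, specializes appropriately to the bipartite setting above; this is automatic since the Ruzsa--Szemer\'edi bound depends only on the total vertex count $2n$, and a bipartite graph is a special case of a general graph. The main potential obstacle would have been if the cited result required the number of matchings to grow with $n$ in a specific way, but the $o(N^2)$ bound is uniform over all such partitions of a graph on $N$ vertices, so no additional work is needed beyond the reduction above.
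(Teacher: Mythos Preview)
Your reduction in the first paragraph is correct and is precisely what the paper does: the lemma is stated there without proof, as an immediate consequence of \cite[Proposition~10.45]{TaoVu}, and your passage from $G$ to the subgraph $H$ on at most $2n$ vertices partitioned into $t\le n$ induced matchings is exactly the trivial setup needed to invoke that proposition.

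However, your second paragraph contains a genuine misconception that you should correct. The hypothesis $t\le n$ is \emph{not} free, and the Ruzsa--Szemer\'edi bound is \emph{not} uniform over all partitions into induced matchings regardless of their number. Any graph whatsoever, including the complete bipartite graph $K_{n,n}$ with $n^2$ edges, has its edge set partitioned into induced matchings if one takes each edge as a singleton matching; so without a bound on $t$ the conclusion is false. The standard statement (and the one in \cite{TaoVu}) is for a graph on $N$ vertices whose edges are covered by at most $N$ induced matchings, and it is precisely the hypothesis $t\le n\le 2n$ that places you in that regime. Your first paragraph already uses this correctly; just delete or fix the commentary in the second paragraph so it does not assert something false.
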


\proofof{Theorem \ref{M2D}}
We use some of the same methods as in the proof of Theorem \ref{M22}.  Let $S$ be a point configuration in $\mathbb{R}^d$ with $|S| \leq n$.  Fix $\epsilon = d^{-1/2}$, and let $K$ be an $\epsilon$-cube such that $W := |K \cap S|$ is maximal.  There is a value $\kappa$, which depends only on $d$, and set of $\epsilon$-cubes $\mathcal{K} = \{K=K_1, \ldots, K_\kappa\}$ such that every vertex in the link of each $w \in W$ is contained in $S \cap (\cup K_i)$.  For each $w \in W$, let $$E_w = E(\lk(w)) = \cup_{1 \leq i < j \leq \kappa} E_{i,j,w}$$ be a set of edges as guaranteed by Corollary \ref{EdgeSet} with $r=3/2$.

We show that for any given $\delta' > 0$ and $n$ sufficiently large, for all $1 \leq i < j \leq \kappa$, $\sum_{w \in W} |E_{i,j,w}| < \delta'|W|n$.  It then follows that $\sum_{w \in W}|E_w| < {\kappa \choose 2}\delta'|W|n$, and that there exists some $w \in W$ such that $|E_w| < {\kappa \choose 2}\delta'n$.  By construction of $E_w$, $\B_1(\lk(w)) < \delta n$ for a $\delta > 0$ that can be chosen arbitrarily small by choosing $\delta'$ sufficiently small.  Then by Lemma \ref{LinkLemma}, $\B_2(R(S)) < \B_2(R(S-\{w\}))+ \delta n$.  By induction on $|S|$ (we keep $n$ fixed and decrease $|S|$ in the inductive step), $\B_2(R(S)) < \delta n^2$ as desired.

First consider the case that $i=1$.  Let $U := S \cap K_j$.  For each $w \in W$, let $U_w$ be the set of endpoints of edges in $E_{1,j,w}$ that are in $U$, as in the proof of Theorem \ref{M22}.  By the same argument as in the proof of Theorem \ref{M22}, $|U_w \cap U_{w'}| \leq 2$ for all $w \neq w'$.  Construct a bipartite graph $G$ with vertex set $W \sqcup U$ and an edge $wu, w \in W, u \in U$ whenever $u \in U_w$.  Label the edge set of $G$ by $EG$.  By $|U_w \cap U_{w'}| \leq 2$ for all $w \neq w'$, no two vertices in $W$ has three common neighbors in $G$.  It follows from Lemma \ref{MooreBound} and the fact that $|U| \leq |W|$ that $$\sum_{w \in W} |E_{1,j,w}| = |EG| \leq C|W|^{3/2} < \delta'|W|n$$ for some constant $C$.  The last inequality follows by taking $n$ sufficiently large.

Now suppose that $i > 1$.  Set $U := S \cap K_i$ and $V := S \cap K_j$, and for all $w \in W$, define $U_w = \{u_{w,1}, \ldots, u_{w,r_w}\}$ and $V_w = \{v_{w,1}, \ldots, v_{w,r_w}\}$ so that $E_{i,j,w}=\{u_{w,1}v_{w,1},\ldots,u_{w,r_w}v_{w,r_w}\}$.  Let $G'$ be the bipartite graph on vertices $U \sqcup V$ with an edge $uv, u \in U, v \in V$ whenever $uv$ is an edge in $R(S)$.  Conditions 1 and 3 of Lemma \ref{EdgeSet} imply that $E_{i,j,w}$ is a matching in $G'$ for all $w \in W$.  If $E_{i,j,w}$ contains edges $uv$ and $u'v'$, and there is an edge $uv'$ or $u'v$ in $G'$, then $uv$ and $u'v'$ are in the same component in $G' \cap \lk(w)$.  Hence Condition 3 of Lemma \ref{EdgeSet} implies that $E_{i,j,w}$ is in fact an induced matching.  It must be that $E_{i,j,w} \cap E_{i,j,w'} = \emptyset$ for all $w \neq w'$; otherwise $\lk(w)$ contains an edge of $E_{i,j,w'}$, which violates Condition 2 of Corollary \ref{EdgeSet}.  If $|W| < \delta'n$, then $$\sum_{w \in W} |E_{i,j,w}| \leq |W|^2 < \delta'|W|n.$$ Otherwise, it follows from Lemma \ref{InducedMatchings} that $$\sum_{w \in W} |E_{i,j,w}| < \delta'|W|^2 < \delta'|W|n$$ for sufficiently large $n$.
\endproof

\begin{theorem}
\label{n32}
There exists a constant $C$ such that for sufficiently large $n$, $M_{2,5}(n) > C n^{3/2}$.
\end{theorem}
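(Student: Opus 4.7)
The plan is an explicit geometric construction of a point set $S \subset \mathbb{R}^5$ with $|S| = n$ achieving $\B_2(R(S)) \geq C n^{3/2}$. The exponent $3/2$ is suggestive of Zarankiewicz-type extremal bounds for $K_{2,2}$-free bipartite graphs (compare Lemma~\ref{MooreBound}), so the underlying combinatorial ingredient is likely an incidence structure resembling a projective plane; the geometric task is to encode it into $\mathbb{R}^5$ distances in a way that promotes each $\HH_1$ class of a vertex link into an $\HH_2$ class of the full complex.

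Concretely, decompose $\mathbb{R}^5 = \mathbb{R}^2 \oplus \mathbb{R}^3$. I would place two tight ``spine'' clusters $U$ and $V$ inside the $\mathbb{R}^2$ factor, each a cluster of $\Theta(\sqrt{n})$ points of very small diameter, so that each is an $R$-simplex while the $U$-to-$V$ bipartite Rips graph is designed to have $\Theta(\sqrt{n})$ connected components --- for instance, a near-matching of size $\sqrt{n}/2$, for which Lemma~\ref{BipartiteLemma} gives $\B_1(R(U \cup V)) \sim \sqrt{n}$. The remaining $\Theta(n)$ points form a ``witness'' set $W$ placed in the $\mathbb{R}^3$ factor with $\mathbb{R}^2$ coordinates near the midpoint of $U$ and $V$; the three coordinates in the $\mathbb{R}^3$ factor of each witness $w$ are tuned so that the link $\lk(w) = R(N_U(w) \cup N_V(w))$ contains an engineered sub-bipartite-complex with $\B_1(\lk(w)) \sim \sqrt{n}$, again via Lemma~\ref{BipartiteLemma}.

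For each witness $w$, the Mayer-Vietoris sequence for the decomposition $R(S) = R(S-w) \cup \st(w)$ (as in Lemma~\ref{LinkLemma}) converts each class of $\HH_1(\lk(w))$ into a candidate $\HH_2$ class of $R(S)$ via the boundary map; paired with a second witness whose link shares the same components, or via the connecting map itself, each witness contributes up to $\B_1(\lk(w)) \sim \sqrt{n}$ new 2-cycles localized near $w$. Summing over the $\Theta(n)$ witnesses then yields the target $\B_2(R(S)) \gtrsim n \cdot \sqrt{n} = n^{3/2}$, provided these contributions are linearly independent in $\HH_2(R(S))$.

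The hard part is twofold. Geometrically, arranging $\Theta(n)$ witnesses in the $\mathbb{R}^3$ factor so that each one sees a sufficiently rich sub-bipartite pattern of the spine requires all three transverse dimensions on top of the two spine dimensions, explaining why $d = 5$ is the threshold dimension for this bound: in fewer dimensions, packing obstructions (similar in spirit to those used throughout the upper bound proofs of Theorem~\ref{M22} and Theorem~\ref{M2D}) prevent the required density of witness adjacencies. Topologically, the main obstacle is proving the linear independence of the $\sim n^{3/2}$ classes arising from distinct witnesses; I expect the argument proceeds by an induction closely modeled on the proof of Lemma~\ref{BipartiteLemma}, using the localized nature of each witness neighborhood to ensure that its $\HH_2$ contribution lies in an orthogonal summand, so that the contributions accumulate rather than cancel under the connecting maps.
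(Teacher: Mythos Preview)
Your high-level architecture---three clusters $U,V,W$ with $W$ a family of ``witnesses'' whose link $\HH_1$ is promoted to global $\HH_2$---matches the paper, but the parameters you propose cannot work, and the combinatorial ingredient you name is the wrong one.

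With $|U|=|V|=\Theta(\sqrt{n})$ and the $U$--$V$ bipartite graph a single matching of size $\Theta(\sqrt{n})$, there are only $\Theta(\sqrt{n})$ cross-edges in total. If each of your $\Theta(n)$ witnesses sees essentially the same matching, then (since $R(W)$ is a simplex) the resulting complex is close to the join of a simplex with $R(U\cup V)$, which is contractible; the $2$-cycles you build from different witnesses cancel rather than accumulate. If instead different witnesses see \emph{different} portions of the matching, the total number of triangles $wuv$ with $uv$ a cross-edge is still only $|W|\cdot\Theta(\sqrt{n})$ at best, but the independence argument collapses long before that: any counting of independent $2$-cycles is ultimately capped by the number of cross-edges available, and $\Theta(\sqrt{n})$ cross-edges cannot support $n^{3/2}$ independent classes. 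The Zarankiewicz/projective-plane analogy is a red herring here.

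The paper instead takes $|U|=|V|=|W|=k^2$ with $3k^2\le n$, so all three clusters have size $\Theta(n)$. The $U$--$V$ bipartite graph has $\Theta(n)$ edges, and the crucial combinatorial structure is that each $w\in W$ sees a size-$k$ \emph{induced matching} in this graph, with the matchings for distinct $w$ pairwise \emph{disjoint}. This is exactly the induced-matching decomposition appearing in Lemma~\ref{InducedMatchings} (the Ruzsa--Szemer\'edi setting), not a $K_{2,2}$-free incidence structure. With $k^2$ witnesses each contributing a disjoint matching of size $k$, the total is $k^3\approx 3^{-3/2}n^{3/2}$, and the disjointness is precisely what makes the $\HH_2$ classes independent; the paper verifies this via an Euler--Poincar\'e count after stripping the triangles $wuv$ (deferred to Section~\ref{QuasiRips}). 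The five dimensions are used concretely: $U$ lives on a small arc in coordinates $1,2$, $V$ on a small arc in coordinates $3,4$, $W$ uses coordinates $1$--$4$ to select which row of $U$ and which row of $V$ it is adjacent to, and coordinate $5$ provides the perturbation that makes the $U$--$V$ adjacency itself a union of matchings.
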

\begin{proof}
We establish the result by producing a point configuration $S \subset \mathbb{R}^5$ with at most $n$ vertices and with $\B_2(R(S)) > Cn^{3/2}$.  Let $k$ be the largest integer such that $3k^2 \leq n$.  Choose a value $\delta$ small relative to $n$ and a value $\epsilon$ small relative to $\delta$; we may take $\delta = n^{-1}$ and $\epsilon = n^{-3}$.  Let $U = \{u_{i,j}, 1 \leq i,j \leq k\}$, $V = \{v_{i,j}, 1 \leq i,j \leq k\}$, $W = \{w_{i,j}, 1 \leq i,j \leq k\}$ with $$u_{i,j} = \left(\frac{\sqrt{2}}{2}\cos (i\delta),\frac{\sqrt{2}}{2}\sin (i\delta),0,0,j \epsilon \right),$$ $$v_{i,j} = \left(0,0,\frac{\sqrt{2}}{2}\cos (i\delta),\frac{\sqrt{2}}{2}\sin (i\delta),j \epsilon \right),$$ $$w_{i,j} = \left(\frac{\sqrt{2}}{4}\cos (i\delta),\frac{\sqrt{2}}{4}\sin (i\delta),\frac{\sqrt{2}}{4}\cos (j\delta),\frac{\sqrt{2}}{4}\sin (j\delta),\frac{\sqrt{3}}{2} \right).$$ 
\noindent
The edge set of $R(S)$ is exactly the following:
\begin{enumerate}
  \item $u_{i,j}u_{i',j'}$ for all $1 \leq i,j,i',j' \leq k$,
  \item $v_{i,j}v_{i',j'}$ for all $1 \leq i,j,i',j' \leq k$,
  \item $w_{i,j}w_{i',j'}$ for all $1 \leq i,j,i',j' \leq k$,
  \item $u_{i,j}v_{i',j}$ for all $1 \leq i,i',j \leq k$,
  \item $u_{i,j}w_{i,j'}$ for all $1 \leq i,j,j' \leq k$,
  \item $v_{i,j}w_{i',i}$ for all $1 \leq i,i',j \leq k$.
\end{enumerate}
The non-existence of edges $u_{i,j}w_{i',j'}$ for $i' \neq i$ and $v_{i,j}w_{i',i''}$ for $i'' \neq i$ is guaranteed by a sufficiently small choice of $\epsilon$.  For all $w \in W$, the set edges of $\lk(w)$ with one endpoint in $U$ and the other in $V$ constitutes an induced matching.  Furthermore, these matchings are disjoint over all $w$.

It can be verified that $\beta_2(R(S)) \geq b$, with $b \approx 3^{-3/2}n^{3/2}$.  We defer the details of this calculation to the more general setting of Section \ref{QuasiRips}.
\end{proof}

Label the above construction with $\delta = 1/n$ and $\epsilon = n^{-3}$ as $S^2(n)$.

\section{Results on higher homology}
The results of the previous section can be extended to higher Betti numbers.  In this section we prove two such extensions.
\label{Hom3}
\begin{theorem}
Let $p \geq 2$, $d$, and $\delta > 0$ be fixed.  If $n$ is sufficiently large, then $M_{p,d}(n) < \delta n^p$.  Also, there exists a value $D_p$ which depends only on $p$ such that $M_{p,2}(n) \leq D_p n^{p-1}$.
\end{theorem}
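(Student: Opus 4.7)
The plan is to prove both assertions by induction on $p$, with Lemma~\ref{LinkLemma} as the engine. The base cases are already in hand: $p=2$ for the first assertion is Theorem~\ref{M2D}, and $p=2$ for the second is Theorem~\ref{M22}. The inductive step is the same mechanism for both assertions, differing only in how the bound on links is plugged in.

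For the inductive step, I would fix any enumeration $S = \{v_1, \ldots, v_n\}$, set $S_i := \{v_1, \ldots, v_i\}$, and apply Lemma~\ref{LinkLemma} $n$ times in succession, peeling off $v_n, v_{n-1}, \ldots, v_1$. Using $\B_p(R(\emptyset)) = 0$, this telescopes to
\[
\B_p(R(S)) \;\leq\; \sum_{i=1}^n \B_{p-1}\bigl(\lk_{R(S_i)}(v_i)\bigr).
\]
The key point is that each link is itself a Rips complex, namely $R(N_{S_i}(v_i))$ on at most $n-1$ points of $\mathbb{R}^d$ (contained in a unit ball around $v_i$). Consequently each summand is at most $M_{p-1,d}(n-1)$.

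For the second assertion, with $d=2$, the inductive hypothesis gives $M_{p-1,2}(n-1) \leq D_{p-1}(n-1)^{p-2} \leq D_{p-1}n^{p-2}$, so summing yields $\B_p(R(S)) \leq D_{p-1}n^{p-1}$, and one may take $D_p := D_{p-1}$ (so in fact $D_p = D_2$ works throughout, where $D_2$ is the constant supplied by Theorem~\ref{M22}).

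For the first assertion, fix $\delta > 0$ and $d$. Apply the inductive hypothesis with $\delta' := \delta/2$ to produce a threshold $N = N(\delta', p-1, d)$ such that $M_{p-1,d}(m) < \delta' m^{p-1}$ whenever $m \geq N$. Split the sum above by whether $|N_{S_i}(v_i)| \geq N$: the ``large link'' indices contribute at most $n \cdot \delta' n^{p-1} = (\delta/2) n^p$, while the ``small link'' indices are each bounded by an absolute constant $C_N$ (for instance the total number of simplices on $N$ vertices, $2^N$), contributing at most $n C_N$ in aggregate. For $n$ large enough that $n C_N < (\delta/2) n^p$, we obtain $\B_p(R(S)) < \delta n^p$, completing the induction.

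The only place requiring care is the ``small link'' tail in the first assertion, since the inductive hypothesis is only uniform above a threshold; but the trivial constant bound $C_N$ on Betti numbers of Rips complexes with boundedly many vertices, together with the fact that there are only $n$ indices in total, ensures this lower-order contribution is absorbed by $\delta n^p$ for large $n$. Everything else is a clean bookkeeping exercise on top of the previously established Theorems~\ref{M22} and~\ref{M2D}.
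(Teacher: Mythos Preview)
Your proposal is correct and follows essentially the same approach as the paper: induction on $p$ via Lemma~\ref{LinkLemma}, with Theorems~\ref{M2D} and~\ref{M22} as the respective base cases. The only difference is that your small-link/large-link split in the first assertion is unnecessary---the paper simply bounds every link by $M_{p-1,d}(n)<\delta n^{p-1}$ (valid since $M_{p-1,d}$ is monotone and each link has at most $n$ points), which absorbs all links at once without a separate tail estimate.
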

\begin{proof}
We prove the first statement by induction on $p$.  The case that $p=2$ follows from Theorem \ref{M2D}.  Let $S \subset \mathbb{R}^d$ with $|S| \leq n$.  For $p > 2$, assume that $n$ is large enough so that $M_{p-1,d}(n) < \delta n^{p-1}$.  Choose $v \in S$.  Then by the inductive hypothesis, $\B_{p-1}(\lk(v)) < \delta n^{p-1}$.  We calculate that $\B_p(R(S)) < \delta|S|n^{p-1} \leq \delta n^p$ by induction on $|S|$.  Indeed, it follows from Lemma \ref{LinkLemma} that $\B_p(R(S)) \leq \B_p(R(S-v)) + \B_{p-1}(\lk(v)) < \delta|S-v|n^{p-1} + \delta n^{p-1}$ as desired.

The second statement follows from Theorem \ref{M22} in the same way.
\end{proof}

Let $\Gamma$ and $\Delta$ be two simplicial complexes.  We define their \textit{simplicial join} $\Gamma \ast \Delta$ by $V(\Gamma \ast \Delta) := V(\Gamma) \sqcup V(\Delta)$ and faces $\{F \cup G: F \in \Gamma, G \in \Delta\}$.  Let $S,S' \subset \mathbb{R}^d$ such that for all $s \in S, s' \in S'$, $\ds(s,s') \leq 1$.  Then $R(S \cup S') = R(S) \ast R(S')$.  By the K\"{u}nneth Formula, $\B_p(R(S \cup S')) = \sum_{i+j=p-1}\B_i(R(S))\B_j(R(S'))$.

\begin{lemma}
For every $k > 0$ and $d \geq 2$, there exists a constant $C_k$ such that $M_{2k-1,d}(n) \geq C_k n^k$ for sufficiently large $n$.
\end{lemma}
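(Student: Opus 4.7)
The plan is to apply the iterated K\"unneth formula for simplicial joins stated in the paragraph preceding the lemma to $S = S_1 \cup \cdots \cup S_k \subset \mathbb{R}^d$, where each $S_j$ is an isometric copy of a base configuration in $\mathbb{R}^2$ with $\B_1(R(S_j)) = \Theta(n/k)$ and the $S_j$'s are arranged so that $R(S)$ is exactly the iterated join $R(S_1) \ast \cdots \ast R(S_k)$.

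For the base configuration, given $m$ and $\epsilon > 0$ small enough that $m^2 \epsilon^4 < 1/4$, set $u_i := (-1/2, i \epsilon^2)$ and $v_i := (1/2, i \epsilon^2)$ for $1 \leq i \leq m$, and $S^1_m := \{u_i\} \cup \{v_i\} \subset \mathbb{R}^2$. A direct calculation gives $\ds(u_i, v_j)^2 = 1 + (i - j)^2 \epsilon^4$, so $u_i v_j$ is an edge of $R(S^1_m)$ iff $i = j$, while every $u$--$u$ and $v$--$v$ pair lies within distance $1$. This is the bipartite matching situation of Lemma \ref{BipartiteLemma} with $G'$ having $m$ components, so $\B_1(R(S^1_m)) = m - 1$; moreover every point of $S^1_m$ has Euclidean norm at most $1/\sqrt{2}$.

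For the iterated configuration, fix pairwise orthogonal linear $2$-planes $P_1, \ldots, P_k \subset \mathbb{R}^{2k}$ through the origin, set $m := \lfloor n/(2k) \rfloor$, choose isometric linear embeddings $\iota_j : \mathbb{R}^2 \hookrightarrow P_j$, and let $S_j := \iota_j(S^1_m)$ and $S := \bigcup_j S_j$. For $s \in S_j$, $s' \in S_{j'}$ with $j \neq j'$, orthogonality gives $\ds(s, s')^2 = |s|^2 + |s'|^2 \leq 1$, so every inter-cluster pair is an edge; intra-cluster distances are preserved by the isometric $\iota_j$; hence $R(S) = R(S_1) \ast \cdots \ast R(S_k)$ with $R(S_j) \cong R(S^1_m)$. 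Iterating the join K\"unneth formula $k-1$ times and retaining the summand with $i_1 = \cdots = i_k = 1$ yields
$$\B_{2k-1}(R(S)) \ \geq \ \prod_{j=1}^k \B_1(R(S_j)) \ \geq \ (m - 1)^k \ \geq \ (n/(3k))^k$$
for $n$ sufficiently large, establishing the lemma with $C_k := 1/(3k)^k$ in dimension $d = 2k$; the statement for all larger $d$ then follows from the trivial monotonicity $M_{p,d} \leq M_{p,d+1}$.

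The main obstacle is reconciling two competing geometric demands: each $S_j$ must contain pairs of distance strictly greater than $1$ (or else $R(S_j)$ is a full simplex and $\B_1(R(S_j)) = 0$), while every inter-cluster pair must lie within distance $1$ (or else the identification $R(S) = R(S_1) \ast \cdots \ast R(S_k)$ fails). Mutually orthogonal $2$-planes through the origin together with the uniform norm bound $1/\sqrt{2}$ are what make both conditions hold simultaneously; the rest is a direct application of the join K\"unneth formula displayed immediately above the lemma.
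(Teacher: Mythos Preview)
Your construction and the K\"unneth computation are correct, but the proof does not establish the lemma as stated. The lemma asserts $M_{2k-1,d}(n) \geq C_k n^k$ for \emph{every} $d \geq 2$, whereas your argument produces a configuration in $\mathbb{R}^{2k}$ and then invokes monotonicity in $d$; this only covers $d \geq 2k$ and leaves the range $2 \leq d < 2k$ unproved. For $k \geq 2$ that is precisely the interesting range, and in fact the paper's next theorem uses this lemma's configuration as a subset of a plane inside $\mathbb{R}^5$, so the $\mathbb{R}^2$ version is genuinely needed.

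The paper resolves the tension you identified (each $S_j$ must contain pairs at distance $>1$, yet all cross-pairs must have distance $\leq 1$) without leaving $\mathbb{R}^2$. Instead of placing the $S_j$ in orthogonal $2$-planes, it takes $S_1$ to be essentially your base configuration---two vertical columns near $(\pm 1/2,0)$ with vertical spacing $\epsilon = n^{-4}$---and then obtains $S_j$ by rotating $S_1$ about the origin through a small angle $j\theta$ with $\theta = 1/n$. The key calculation is that two opposite-side points $s \in S_i$, $s' \in S_j$ with $i \neq j$ lie on a circle of radius roughly $1/2$ but are \emph{not} antipodal: the relative rotation by $(j-i)\theta$ shortens the chord below $1$, and the $\epsilon$-perturbations are of order $n^{-4}$, far smaller than the $\Theta(n^{-2})$ gain from the rotation. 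Thus $\ds(s,s') < 1$ for all cross-pairs, giving $R(S) = R(S_1) \ast \cdots \ast R(S_k)$ inside $\mathbb{R}^2$. From there the K\"unneth argument is identical to yours. The missing idea, then, is replacing orthogonality of ambient planes by a small rotation within a single plane.
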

\begin{proof}
We prove the result by giving a point configuration $S \subset \mathbb{R}^2$ with $|S| \leq n$ and $\B_{2k-1}(R(S)) \geq C_kn^k$.  Let $r := \lfloor n/(2k) \rfloor$, $\theta := 1/n$, and $\epsilon := n^{-4}$.  For $1 \leq i \leq k$, define $s_i^+ := (1/2,i \epsilon)$ and $s_i^- := (-1/2,i \epsilon)$.  Let $S_1 := \{s_1^+, \ldots, s_r^+, s_1^-, \ldots, s_r^-\}$.  For $2 \leq j \leq p$, construct $S_j$ by rotating $S_1$ counterclockwise about the origin by an angle of $\theta j$ and let $S := S_1 \cup \ldots \cup S_k$.

For each $i$, $\B_1(R(S_i)) = r-1$ by Lemma \ref{BipartiteLemma}.  For all $i \neq j$ and $s \in S_i, s' \in S_j$, $\ds(s,s') < 1$ by the small choice of $\epsilon$.  It follows by the K\"{u}nneth Formula that $\B_{2k-1}(R(S)) \geq (r-1)^k$.
\end{proof}

Label the above construction as $S^{2k-1}(n)$ with $S^{-1}(n) = \emptyset$.

\begin{theorem}
For every $p > 0$ and $d \geq 5$, there exists a constant $C_p$ such that $M_{p,d}(n) \geq C_p n^{p/2+1/2}$ for sufficiently large $n$.
\end{theorem}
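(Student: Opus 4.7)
The argument splits by parity of $p$. For odd $p = 2k-1$, the preceding lemma directly supplies a configuration in $\mathbb{R}^2 \subset \mathbb{R}^d$ with $\B_p \geq C_k n^k$; since $n^k = n^{p/2+1/2}$, this gives the required bound with $C_p := C_k$.

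For even $p = 2k$, the plan is to build a point set $T \subset \mathbb{R}^5$ with $|T| \leq n$ whose Rips complex realizes the simplicial join
\[
R(T) \;=\; R(S^2(n_0)) \,\ast\, R(S^1(n_1)) \,\ast\, \cdots \,\ast\, R(S^1(n_{k-1})),
\]
with each $n_i$ of order $n/k$. Here $S^2(n_0)$ is the construction from Theorem \ref{n32}, and each $S^1(n_i)$ is one of the rotated ``pair-cluster'' pieces $S_j$ from the proof of the preceding lemma, embedded into $\mathbb{R}^5$. The base case $k = 1$ (that is, $p = 2$) is Theorem \ref{n32} itself. For $k \geq 2$, iterating the K\"{u}nneth identity noted just before the theorem yields
\[
\B_{2k}(R(T)) \;\geq\; \B_2(R(S^2(n_0))) \prod_{i=1}^{k-1} \B_1(R(S^1(n_i))) \;\gtrsim\; n_0^{3/2} \prod_{i=1}^{k-1} n_i \;\sim\; n^{k+1/2} \;=\; n^{p/2+1/2},
\]
the degree matching because a $k$-fold join of reduced homology classes in degrees $2,1,\dots,1$ (one $S^2$-factor and $k-1$ copies of $S^1$) lives in degree $(2+(k-1)) + (k-1) = 2k$.

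The principal obstacle will be realizing this join geometrically: every pair of points from distinct pieces of $T$ must lie at distance at most $1$. Since each piece already carries internal pairs at distance just above $1$ (as is needed for it to have nontrivial Rips topology at all), the entire $T$ cannot fit inside a ball of diameter $1$, so the five ambient dimensions have to be exploited carefully. The idea, following the rotation construction used to prove the preceding lemma, is to place the $k-1$ copies of $S^1$ as rotates of a common 2-dimensional configuration into distinct 2-planes of $\mathbb{R}^5$ and arrange them relative to $S^2(n_0)$ so that every cross distance between an $S^1$-point and an $S^2$-point is at most $1$; the rotation angles, chosen small as in the planar case, then force all pairwise cross distances between different $S^1$-copies to lie strictly below $1$. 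Verifying these distance inequalities against the explicit coordinate formulas from Theorem \ref{n32} will be the technical heart of the construction.
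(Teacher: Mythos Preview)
Your plan coincides with the paper's argument. The paper packages your $k-1$ rotated $S^1$-pieces as the single planar configuration $S^{p-3}(\lceil n/2\rceil)$ from the preceding lemma (so that a single join $R(S^2)\ast R(S^{p-3})$ replaces your iterated join), then embeds that planar set into $\mathbb{R}^5$ via the explicit affine isometry $(x,y)\mapsto(\sqrt{2}/4,\,x,\,\sqrt{2}/4,\,y,\,\sqrt{3}/6)$ and checks that every cross distance to a point of $S^2(\lfloor n/2\rfloor)$ is at most $\sqrt{7/12}+o(1)<1$.

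One phrase in your sketch is off and would derail the construction if taken literally: you write that the $k-1$ copies of $S^1$ go ``into distinct $2$-planes of $\mathbb{R}^5$.'' They cannot---for $p\geq 8$ there are not enough dimensions. The rotation construction you cite from the lemma keeps all the $S_j$ in \emph{one} common plane, rotated by tiny angles; that is exactly what makes the cross distances between different $S^1$-copies fall below $1$, and it is what lets the whole $S^{p-3}$ block fit into a single affine $2$-plane inside $\mathbb{R}^5$. Once you place all the $S^1$-copies in that one plane, your outline and the paper's proof are the same argument.
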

\begin{proof}
The result follows for odd $p$ by the existence of $S^{2k-1}(n)$, so consider even $p$.  Let $S = S^2(\lfloor n/2 \rfloor)$ and $\tilde{S} = S^{p-3}(\lceil n/2 \rceil)$.  Let $S'$ be the image of $\tilde{S}$ under the isometry that sends $(x,y)$ to $(\sqrt{2}/4,x,\sqrt{2}/4,y,\sqrt{3}/6)$.  There exists $\alpha \rightarrow 0$ as $n \rightarrow \infty$ such that every point in $S$ is within distance $\alpha$ from either $(0,0,\sqrt{2}/2,0,0)$, $(\sqrt{2}/2,0,0,0,0)$, or $(\sqrt{2}/4,0,\sqrt{2}/4,0,\sqrt{3}/2)$, and every point of $S'$ is within distance $\alpha$ of either $(\sqrt{2}/4,1/2,\sqrt{2}/4,0,\sqrt{3}/6)$ or $(\sqrt{2}/4,-1/2,\sqrt{2}/4,0,\sqrt{3}/6)$.  Hence for all $s \in S, s' \in S'$, $\ds(s,s') < \sqrt{7/12}+\alpha'$, where $\alpha' \rightarrow 0$ as $n \rightarrow \infty$.  This proves the result by the K\"{u}nneth Formula.
\end{proof}

\section{Quasi-Rips complexes}
\label{QuasiRips}
Quasi-Rips complexes, discussed in \cite{Planar}, are relaxations of Rips complexes.  Given a finite set $S \subset \mathbb{R}^d$ and fixed $0 < \alpha < 1$, a \textit{quasi-Rips complex with parameter} $\alpha$ on $S$ is a flag complex with vertex set $S$, an edge $uv$ whenever $\ds(u,v) \leq \alpha$, and no edge $uv$ when $\ds(u,v) > 1$.  If $\alpha < \ds(u,v) \leq 1$, then the edge $uv$ may be included or excluded arbitrarily.  All Rips complexes are quasi-Rips complexes with parameter $\alpha$ for any $0 < \alpha < 1$.

There is much greater freedom in the kinds of graphs that arise as the graphs of quasi-Rips complexes.  Let $G$ be a graph with three vertex subsets $U_1,U_2,U_3$ such that for $1 \leq i \leq 3$, there is an edge $uu'$ for all $u,u' \in U_i$.  The other edges of $G$ may be chosen arbitrarily.  For any $0 < \alpha < 1$, $G$ can arise as a quasi-Rips complex of a point configuration in $\mathbb{R}^2$ if the points of $U_1, U_2, U_3$ are all near $(0,0), (0,1), (\sqrt{3}/2,1/2)$ respectively and inside the triangle with these three vertices.  If $\alpha < 1/2$, $G$ can even be the graph of a quasi-Rips complex of a point configuration in $\mathbb{R}^1$ by concentrating all points of $U_1, U_2, U_3$ near $0, 1/2, 1$ respectively and inside the interval $[0,1]$.

Despite this freedom, the Betti numbers of quasi-Rips complexes obey nontrivial upper bounds.  Given $S \subset \mathbb{R}^d$, let $Q^{\alpha}(S)$ be the set of quasi-Rips complexes on $S$ with parameter $\alpha$.  Let $Q^{\alpha}_d(n)$ be the union of all $Q^{\alpha}(S)$ as $S$ ranges over subsets of $\mathbb{R}^d$ of size at most $n$, and define $$M^{\alpha}_{p,d}(n) := \max\{\B_p(\Gamma): \Gamma \in Q^{\alpha}_d(n)\}.$$

We focus specifically on $M^{\alpha}_{2,d}(n)$.  To do so, we again consider induced matchings.  Let $I(n)$ be the maximum value of $\sum_{i=1}^t |M_i|$, where $t \leq n$ and the $M_i$'s are disjoint, induced matchings on a bipartite graph $G$ with $V(G) = U \sqcup V$ and $|U|, |V| \leq n$.

\begin{theorem}
\label{qr}
For each $d \geq 2$ and $0 < \alpha < 1$, there exist values $C_{d,\alpha}$ and $D_{d,\alpha}$ such that $$C_{d,\alpha}I(n) < M^{\alpha}_{2,d}(n) < D_{d,\alpha}I(n)$$ for sufficiently large $n$.
\end{theorem}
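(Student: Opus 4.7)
The proof splits into an upper bound $M^\alpha_{2,d}(n) < D_{d,\alpha}I(n)$ via the methods of Theorem \ref{M2D} and a lower bound $C_{d,\alpha}I(n) < M^\alpha_{2,d}(n)$ via an explicit construction generalizing Theorem \ref{n32}.

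For the upper bound, the plan is to mimic Theorem \ref{M2D} step by step. The key point is that Lemmas \ref{CycleBasis} through \ref{RegGen3} and Corollary \ref{EdgeSet} depend only on the flag property together with the fact that a single $\epsilon$-cube spans a simplex, and the latter persists for any quasi-Rips complex provided $\epsilon \leq \alpha/\sqrt{d}$ (since then intra-cube distances never exceed $\alpha$, forcing all such edges to be mandatory). Fix such an $\epsilon$, pick a densest $\epsilon$-cube $K$, set $W := K \cap S$, and cover the $1$-neighborhoods of $W$ by $\kappa$ cubes $K_1 = K, K_2, \ldots, K_\kappa$. For each $w \in W$, apply Corollary \ref{EdgeSet} to $\lk(w)$ to obtain edge sets $E_{i,j,w}$. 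The task reduces to showing $\sum_{w \in W}|E_{i,j,w}| \leq D'I(n)$ for each pair $(i,j)$.

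For pairs with $i \neq 1$, Condition 3 of Corollary \ref{EdgeSet} makes each $E_{i,j,w}$ an induced matching in the bipartite graph on $(S \cap K_i) \sqcup (S \cap K_j)$, and Condition 2 forces $\{E_{i,j,w}\}_{w \in W}$ to be pairwise edge-disjoint: if $uv \in E_{i,j,w} \cap E_{i,j,w'}$ with $w \neq w'$, then $w'uv$ is a face of $\Gamma$ with $w' \in W \setminus \{w\}$, violating Condition 2 for $E_{i,j,w}$. Since $|W| \leq n$ and both parts of the bipartite graph have size at most $n$, the definition of $I$ yields $\sum_{w \in W}|E_{i,j,w}| \leq I(n)$; crucially, this reduction is purely combinatorial and does not invoke Lemma \ref{CrossingLemma} or Lemma \ref{PerpLemma}. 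For the $i = 1$ case, $E_{1,j,w}$ consists of induced-matching edges between $W \setminus \{w\}$ and $S \cap K_j \cap N(w)$; exploiting that $W$ is itself a clique in $\Gamma$, one can reinterpret $\bigcup_w E_{1,j,w}$ as at most $|W|$ induced matchings in the single bipartite graph on $W \sqcup (S \cap K_j)$, again bounded by $I(n)$. Summing over the bounded number of cube pairs gives $\sum_{w \in W}|E_w| \leq D_1I(n)$, so some $w \in W$ satisfies $|E_w| \leq D_1I(n)/|W|$ and hence $\B_1(\lk(w)) \leq D_2I(n)/|W|$. Lemma \ref{LinkLemma} together with an induction that peels off $W$ vertex by vertex before selecting a new densest cube then yields $\B_2(\Gamma) \leq D_{d,\alpha}I(n)$.

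For the lower bound, fix $G$ on $U \sqcup V$ with $|U|, |V| \leq n/3$ and disjoint induced matchings $M_1, \ldots, M_t$ (with $t \leq n/3$) whose total size is within a constant factor of $I(n)$. Place tiny clusters of diameter below $\alpha$ at the vertices $p_U, p_V, p_W$ of an almost-equilateral triangle of side slightly less than $1$ in $\mathbb{R}^d$, and distribute $U$, $V$, and $W := \{w_1, \ldots, w_t\}$ into the three clusters. Mandatory intra-cluster edges make each cluster a clique; use the optional-edge freedom of quasi-Rips to set $uv$ to be an edge iff $uv \in G$ (for $u \in U, v \in V$) and $w_iu$ (resp.\ $w_iv$) to be an edge iff $u$ (resp.\ $v$) is an endpoint of an edge of $M_i$. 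Then $\lk(w_i)$ restricted to $U \cup V$ realizes exactly the bipartite structure of $M_i$, so $\B_1(\lk(w_i)) \geq |M_i| - 1$ by Lemma \ref{BipartiteLemma}. A Mayer--Vietoris argument analogous to the calculation deferred at the end of Theorem \ref{n32} lifts these classes to linearly independent elements of $\HH_2(\Gamma)$, yielding $\B_2(\Gamma) \geq C_{d,\alpha}(\sum_i|M_i| - t) \geq C_{d,\alpha}'I(n)$.

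The hardest step will be making the $i = 1$ case of the upper bound fully rigorous. Theorem \ref{M2D} handled it with Lemmas \ref{CrossingLemma} and \ref{PerpLemma}, which rely on the exact distance-to-edge correspondence of Rips complexes and fail for quasi-Rips. The substitute outlined above---reinterpreting $\bigcup_w E_{1,j,w}$ as at most $|W|$ induced matchings in an aggregate bipartite graph---will need Condition 4 of Corollary \ref{EdgeSet} to prevent the matchings associated to different $w$'s from interfering, and this is where my plan is most likely to require a separate combinatorial lemma.
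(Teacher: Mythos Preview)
Your overall strategy matches the paper's: mimic Theorem~\ref{M2D} for the upper bound with $\epsilon = \alpha d^{-1/2}$, and realize an extremal induced-matching configuration as a three-cluster quasi-Rips complex for the lower bound. The $i>1$ case of the upper bound and the lower-bound construction are essentially the paper's.

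The substantive divergence is your handling of the $i=1$ case. You propose to regard the $E_{1,j,w}$ as at most $|W|$ induced matchings in the bipartite graph on $W \sqcup (S\cap K_j)$ and invoke the definition of $I(n)$. Each $E_{1,j,w}$ is indeed an induced matching there (Condition~3 forces this), but you have no mechanism for \emph{disjointness}: nothing prevents the same edge $\tilde w u$ from lying in $E_{1,j,w}$ for many different $w$, and Condition~2 is vacuous when $i=1$. Condition~4 does not yield disjointness either; what it does yield---and this is exactly the argument borrowed from Theorem~\ref{M22} that Theorem~\ref{M2D} invokes---is the purely combinatorial bound $|U_w \cap U_{w'}| \le 2$ for $w \ne w'$. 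The paper then applies Lemma~\ref{MooreBound} to the incidence graph $\{(w,u):u\in U_w\}$ to obtain $\sum_w |E_{1,j,w}| \le C|W|^{3/2}$, and absorbs this into $I(n)$ via the lower bound $I(n) \ge c\,n^{3/2}$ coming from the construction of Theorem~\ref{n32}. So your instinct that Condition~4 is the key is right, but it feeds into the $K_{2,3}$-free edge bound rather than into an induced-matching bound.

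Two smaller points. First, the paper explicitly records the near-superlinearity $I(n) \ge C\,I(n')\,n/n'$ for $n > n' \gg 0$; this is what allows the Lemma~\ref{LinkLemma} induction (in which $|S|$ and the densest-cube size both shrink) to terminate with a bound in terms of $I(n)$ rather than $I(|S|)$. Your ``peel off $W$ then re-select'' scheme needs the same fact. Second, for the lower bound the paper does not run Mayer--Vietoris on the links. It deletes the maximal $2$-faces $N_i u v$ to obtain $\Gamma'$, bounds $\B_1(\Gamma') \le n$ directly (adding each $N_i$ to $\Gamma'[U',V']$ raises $\B_1$ by at most $2$), and then observes that re-inserting each such $2$-face either lowers $\B_1$ by one or raises $\B_2$ by one, whence $\B_2(\Gamma) \ge \sum_i |N_i| - \B_1(\Gamma')$. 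This Euler--Poincar\'e count sidesteps the analysis of the full $\lk_\Gamma(N_i)$, which, as you implicitly note, is complicated by the presence of the other $N_j$'s.
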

\begin{proof}
The proof of the upper bound is very similar to that of Theorem \ref{M2D}.  The changes necessary are to use $\epsilon = \alpha d^{-1/2}$ instead of $\epsilon = d^{-1/2}$, and to make the observation that for some absolute constant $C$ and all $n>n' \gg 0$, $I(n) \geq CI(n') n/n'$.

For the lower bound, consider a bipartite graph $G$ with vertex set $U \sqcup V$, $|U|,|V| \leq n$, and disjoint, induced matchings $M_1, \ldots, M_t, t \leq n$ such that $\sum_{i=1}^t |M_i| = I(n)$.  Let $\mathcal{N} := \{N_1, N_2, \ldots, N_{t'}\}, t' = \min\{t,\lfloor n/3 \rfloor\}$ be a subset of $t'$ largest matchings of the set $\{M_1, \ldots, M_t\}$.  Let $U'$ be a set of $\min\{|U|,\lfloor n/3 \rfloor\}$ vertices of $U$ that are endpoints for the largest number of matchings in $\mathcal{N}$, and restrict each element of $\mathcal{N}$ to edges with endpoints in $U' \sqcup V$.  Finally, let $V'$ be a set of $\min\{|V|,\lfloor n/3 \rfloor\}$ vertices of $V$ that are endpoints for the largest number of matchings in $\mathcal{N}$, and restrict each element of $\mathcal{N}$ to edges with endpoints in $U' \sqcup V'$.  Then for large $n$, $\sum_{i=1}^{t'} |N_i| \geq \frac{1}{27.1}I(n)$.

Let $G'$ be a graph with vertex set $U' \sqcup V' \sqcup \mathcal{N}$, with edges defined as follows.  For $u \in U', v \in V'$, $uv$ is an edge in $G'$ if it is an edge in $G$.  For all $u,u' \in U', v,v' \in V'$, $uu'$ and $vv'$ are edges in $G'$.  The neighbors of $N_i$ are all $N_j$ for $j \neq i$, and all vertices $u \in U', v \in V'$ such that $uv \in N_i$.  Then, by the discussion preceding Theorem \ref{qr}, there exists a simplicial complex $\Gamma$ such that $\Gamma \in Q^{\alpha}_d(n)$ and $\Gamma = X(G')$.  Next we calculate $\B_2(\Gamma)$.

Consider $\Gamma'$, which is obtained from $\Gamma$ by removing all faces of the form $Nuv$ for $N \in \mathcal{N}, u \in U', v \in V'$.  By construction, every face removed in this manner is a maximal face in $\Gamma$.  Note that $\Gamma'$ is neither a Rips complex nor a flag complex.  By Lemma \ref{BipartiteLemma}, $\B_1(\Gamma'[U',V']) = \B_1(\Gamma[U',V']) \leq \lfloor n/3 \rfloor$.  To calculate $\B_1(\Gamma')$, consider $\Gamma'_i := \Gamma'[U',V',N_1, \ldots, N_i]$.  Then $\lk_{\Gamma'_i}(N_i)$ consists of at most three components, as $\lk_{\Gamma'_i}(N_i) \cap \Gamma[U'], \lk_{\Gamma'_i}(N_i) \cap \Gamma[V'], \lk_{\Gamma'_i}(N_i) \cap \Gamma[\mathcal{N}]$ are all connected, and so $\B_0(\lk_{\Gamma'_i}(N_i)) \leq 2$.  By induction on $i$, $\B_1(\Gamma') \leq \lfloor n/3 \rfloor + 2\lfloor n/3 \rfloor \leq n$.  By Euler-Poincar\'{e} formula, $\B_2(\Gamma) > \frac{1}{27.1}I(n) - n \geq C_{d,\alpha}I(n)$ for some constant $C_{d,\alpha}$, which proves the lower bound.
\end{proof}

The example of Theorem \ref{n32} satisfies the description of Theorem \ref{qr}, with each vertex in $W$ corresponding to a matching of $k$ pairs of vertices on $U$ and $V$.  Hence $\B_2(R(S)) \approx 3^{-3/2}n^{3/2}.$

Determining the value of $I(n)$, even to within a multiplicative constant, is a very challenging problem.  It is shown in \cite{Elkin} that there exists a constant $C$ so that, for large $n$, there exists $A \subset \mathbb{Z}/n\mathbb{Z}$ such that $A$ contains no arithmetic progressions of length $3$ and $|A| \geq C n 2^{-2\sqrt{2}\sqrt{\log_2(n)}} \log^{1/4}(n)$.  Such an $A$ can be adapted into a tripartite graph with $n$ vertices in each component, at least $C n^2 2^{-2\sqrt{2}\sqrt{\log_2(n)}} \log^{1/4}(n)$ triangles, and the property that no two triangles that share an edge.  This graph can then be further adapted into $n$ disjoint, induced matchings on a bipartite graph with $n$ vertices on each side.  The collective size of the matchings is $C n^2 2^{-2\sqrt{2}\sqrt{\log_2(n)}} \log^{1/4}(n)$.  An upper bound on $I(n)$, as given in the proof of \cite[Proposition 10.45]{TaoVu}, is $C'\frac{n^2}{(\log_*(n))^{1/5}}$ for some constant $C'$, where $\log_*(n)$ is the number of natural logarithms one needs to apply to $n$ to obtain a nonpositive value.  The $\log_*(n)$ term comes from the usage of the Szemer\'edi Regularity Lemma in the proof.

Theorem \ref{qr} can be extended to higher Betti numbers using similar techniques as in Section \ref{Hom3}.

\end{document}